\newtheorem{thm}{Theorem}[section]
\newtheorem{theorem}{Theorem}
\newtheorem{Corollary}[theorem]{Corollary}
\newtheorem{Question}[thm]{Question}
\newtheorem*{theorem*}{Theorem}
\newtheorem{lemma}[thm]{Lemma}
\newtheorem{proposition}[thm]{Proposition}
\newtheorem{conjecture}[thm]{Conjecture}
\theoremstyle{definition}
\newtheorem{notation}[thm]{Notation}
\theoremstyle{definition}
\newtheorem{example}[thm]{Example}
\theoremstyle{definition}
\newtheorem{definition}[thm]{Definition}
\numberwithin{equation}{section}
\def\R{{\mathbb R}}
\def\E{{\mathbb E\,}}
\def\Z{{\mathbb Z}}
\def\n{{\mathbb N}}
\def\DD{{\mathcal D}}
\def\N{{\mathbb N}}
\def\wt{\widetilde}
\def\PP{{\mathcal P}}
\def\PPP{{\mathscr{P}}}
\def\TT{{\mathcal T}}
\def \EE{{\mathcal{E}}}
\def\<{\langle}
\def\>{\rangle}
\def \ee{{\epsilon}}
\def \dd{{\delta}}
\def \ss{{\sigma}}
\def \gg {{\gamma}}
\def \aa {{\alpha}}
\newcommand{\cat}{\operatorname{CAT}}
\begin{document}

\title{Markov Type constants, flat tori and Wasserstein spaces}


\begin{abstract}
Let $M_p(X,T)$ denote the Markov type $p$ constant at time $T$ of a metric space $X$, where $p \ge 1$. We show that $M_p(Y,T) \le M_p(X,T)$ in each of the following cases:
\begin{enumerate}[(a)]
\item{$X$ and $Y$ are geodesic spaces and $Y$ is covered by $X$ via a finite-sheeted locally isometric covering, \label{Acov}}
\item{$Y$ is the quotient of $X$ by a finite group of isometries,} \label{Agroup}
\item{$Y$ is the $L^p$-Wasserstein space over $X$.\label{Awas}}
\end{enumerate}
As an application of (\ref{Acov}) we show that all compact flat manifolds have Markov type $2$ with constant $1$. In particular the circle with its intrinsic metric has Markov type $2$ with constant $1$. This answers the question raised by S.-I. Ohta and M. Pichot.

Parts (\ref{Agroup}) and (\ref{Awas}) imply new upper bounds for Markov type constants  of the $L^p$-Wasserstein space over $\R^d$. 
These bounds were conjectured by A. Andoni, A. Naor and O. Neiman.
They imply certain restrictions on bi-Lipschitz embeddability of snowflakes into such Wasserstein spaces.

\end{abstract}
\keywords{Markov type, Alexandrov space, Flat manifold, Wasserstein space}
\subjclass[2010]{51F99}

\author{Vladimir Zolotov}
\address[Vladimir Zolotov]{Steklov Institute of Mathematics, Russian Academy of Sciences, 27 Fontanka, 191023 St.Petersburg, Russia and Mathematics and Mechanics Faculty, St. Petersburg State University, Universitetsky pr., 28, Stary Peterhof, 198504, Russia.}
\email[Vladimir Zolotov]{paranuel@mail.ru}

\maketitle

\section{Introduction}

Let $X$ be a metric space, $p \ge 1$, and $T \in \n$. We denote by $M_p(X,T) \in [0, \infty)$ the  Markov type $p$ constant at time $T$ of $X$, see Definition \ref{MTT}. The Markov type $p$ constant of $X$, denoted by $M_p(X)$ is defined by  
$$M_p(X) = \sup_{T \in \n}{M_p(X,T)} \in [1,\infty].$$
We say that $X$ has Markov type $p$ if $M_p(X) < \infty$. 



Ball \cite{Ball} introduced the concept of Markov type in his study of the Lipschitz extension problem.
Major results in this direction were obtained later by Naor, Peres, Schramm and Sheffield \cite{NPSS}.
The notion of Markov type has also found applications in the theory of bi-Lipschitz embeddings \cite{BLMN,LMN}. 

It was shown in \cite{OP} that if $X$ is a geodesic metric space with $M_2(X) = 1$ then $X$ is nonnegatively curved in sense of Alexandrov. Ohta \cite{Ohta} showed that there exists an universal constant $M_A$ such that every nonnegatively curved Alexandrov space $X$ has Markov type $2$ with $M_2(X) \le M_A$. The best known bound $M_A \le \sqrt{1 + \sqrt{2} + \sqrt{4\sqrt{2} - 1}} = 2.08\dots$ is due to Andoni, Naor, and Neiman \cite{ANN}.

The authors of \cite{ANN} mentioned that it is plausible that $M_A = 1$. Thus we have the following question.

\begin{Question}[A. Andoni, A. Naor, O. Neiman \cite{ANN}]\label{Quest}
Is it true that ${M_2(X) = 1}$ for every Alexandrov space $X$ of nonnegative curvature?
\end{Question}

In the original paper \cite{Ball} (see also \cite{LMN}) Ball shows that Hilbert spaces have Markov type $2$ with constant $1$. Therefore $M_2(X) = 1$ for every subset $X$ of $L^2$. Thus convex subsets of Hilbert spaces are examples of geodesic spaces with Markov type $2$ constant $1$.
But it seems that no other examples of geodesic spaces having Markov type $2$ with constant $1$ are known.
The following two theorems allow us to expand the list of such examples.
Though the theorems are motivated by Question \ref{Quest}, they do not involve Alexandrov geometry.

\begin{theorem}\label{ThmCases}
Let $p \ge 1$, and let $X$, $Y$ be metric spaces such that at least one of the following holds:
\begin{enumerate}
\item{$X$ and $Y$ are geodesic spaces and $Y$ is covered by $X$ via a finite-sheeted locally isometric covering,} \label{FSC}
\item{$Y$ is a quotient of $X$ by a finite group of isometries,}\label{FGC}
\item{$Y$ is the $L^p$-Wasserstein space over $X$.}\label{WS}
\end{enumerate}
Then for every $T \in \n$ we have $M_p(X,T) \ge M_p(Y,T)$ and ${M_p(X) \ge M_p(Y)}$.
\end{theorem}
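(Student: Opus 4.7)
The plan is to prove all three parts by a single scheme. Given a stationary reversible Markov chain $\{Z_t\}_{t=0}^T$ on a finite state space $\Omega$ with transition kernel $K$ and a map $f\colon\Omega\to Y$, construct an enlarged stationary reversible chain $\{\tilde Z_t\}$ on a space $\tilde\Omega$ whose projection to $\Omega$ is $\{Z_t\}$, together with a map $\tilde f\colon\tilde\Omega\to X$, such that almost surely
\[
d_X\bigl(\tilde f(\tilde Z_0),\tilde f(\tilde Z_1)\bigr)=d_Y\bigl(f(Z_0),f(Z_1)\bigr)\qquad\text{and}\qquad d_X\bigl(\tilde f(\tilde Z_0),\tilde f(\tilde Z_T)\bigr)\ge d_Y\bigl(f(Z_0),f(Z_T)\bigr).
\]
Substituting $(\tilde Z_t,\tilde f)$ into the definition of $M_p(X,T)$ then yields $M_p(Y,T)\le M_p(X,T)$. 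Only the enlargement rule changes with the case.

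For case (\ref{FSC}), where $\varpi\colon X\to Y$ is a $k$-sheeted locally isometric covering, let $\tilde\Omega=\{(\omega,\tilde y):\tilde y\in\varpi^{-1}(f(\omega))\}$ and $\tilde f(\omega,\tilde y)=\tilde y$. For each ordered pair $(\omega,\omega')$ with $K(\omega,\omega')>0$ fix a geodesic $\gamma_{\omega,\omega'}$ in $Y$ from $f(\omega)$ to $f(\omega')$, choosing $\gamma_{\omega',\omega}$ to be its reverse; then the enlarged chain transitions from $(\omega,\tilde y)$ to $(\omega',\tilde y')$, where $\tilde y'$ is the endpoint of the lift of $\gamma_{\omega,\omega'}$ that starts at $\tilde y$. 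Because $\varpi$ is a local isometry the lifted path has length $d_Y(f(\omega),f(\omega'))$, and because $\varpi$ does not increase distances, $d_X(\tilde y,\tilde y')=d_Y(f(\omega),f(\omega'))$; the same distance-nonincrease property delivers the $T$-step inequality, and the symmetry of the $\gamma$'s gives detailed balance with respect to $(\omega,\tilde y)\mapsto\rho(\omega)/k$, where $\rho$ is the stationary measure of $\{Z_t\}$. For case (\ref{FGC}), fix any lift $\tilde f_0\colon\Omega\to X$ of $f$ and set $\tilde\Omega=\Omega\times G$, $\tilde f(\omega,g)=g\cdot\tilde f_0(\omega)$. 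Let $H^*(\omega,\omega')=\{h\in G:d_X(\tilde f_0(\omega),h\cdot\tilde f_0(\omega'))=d_Y(f(\omega),f(\omega'))\}$; since $G$ acts by isometries one has $H^*(\omega',\omega)=H^*(\omega,\omega')^{-1}$, and the transition $(\omega,g)\mapsto(\omega',gh)$ with $h$ uniform in $H^*(\omega,\omega')$ gives a reversible chain whose one-step $X$-distance realises the $Y$-distance, the $T$-step inequality being automatic from the quotient map being distance-nonincreasing.

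In case (\ref{WS}) write $\mu_\omega=f(\omega)$ and, for each ordered edge, pick an optimal $W_p$-coupling $\gamma_{\omega,\omega'}$ of $\mu_\omega$ and $\mu_{\omega'}$ with $\gamma_{\omega',\omega}$ its coordinate swap (possible because $d_X^p$ is symmetric). Take $\tilde\Omega=\{(\omega,x):x\in\operatorname{supp}\mu_\omega\}$, $\tilde f(\omega,x)=x$, stationary measure $\rho(\omega)\,\mu_\omega(dx)$, and transition $(\omega,x)\mapsto(\omega',y)$ by sampling $\omega'\sim K(\omega,\cdot)$ and then $y$ from the conditional of $\gamma_{\omega,\omega'}$ given $x$. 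The marginal property of $\gamma_{\omega,\omega'}$ yields stationarity, the swap symmetry yields reversibility, and the one-step $X$-expectation integrates $d_X^p$ against $\gamma_{\omega,\omega'}$ giving exactly $W_p(\mu_\omega,\mu_{\omega'})^p$, while the $T$-step expectation integrates against some coupling of $(\mu_{Z_0},\mu_{Z_T})$ and is thus at least $W_p^p$. The main obstacle is that $\tilde\Omega$ need not be finite while $M_p(X,T)$ is defined via finite-state chains; this must be handled by an approximation argument, replacing each $\mu_\omega$ by a finitely supported measure close to it in $W_p$ and passing to the limit using continuity of $W_p$ (with control on $p$-th moments), or equivalently by verifying that $M_p(X,T)$ coincides with its analogue for stationary reversible chains on general measurable spaces. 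This compatibility/approximation step is where the genuine technical work in case (\ref{WS}) lies.
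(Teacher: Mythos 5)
Your cases (\ref{FSC}) and (\ref{FGC}) are correct and essentially coincide with the paper's route: the paper also builds the lifted chain on $\{(s,\tilde x):\chi(\tilde x)=f(s)\}$, using chosen geodesics and uniqueness of path lifting in the covering case, and pairs realizing the quotient distance in the group case (your $\Omega\times G$ with a uniform choice in $H^*(\omega,\omega')$ is a mild repackaging of the paper's ``regular map with respect to $E$'' lemma; your symmetry checks $H^*(\omega',\omega)=H^*(\omega,\omega')^{-1}$ and the reverse-geodesic convention play exactly the role of that lemma's regularity hypothesis in verifying detailed balance). For case (\ref{WS}) you take a genuinely different route. The paper does not lift Wasserstein walks at all: it isometrically embeds the rescaled symmetric powers $n^{-1/p}(X^n_p/S_n)$ into $\PPP_p(X)$ via uniform atomic measures (Birkhoff--von Neumann), notes that these images are dense, and reduces case (\ref{WS}) to case (\ref{FGC}) combined with the behaviour of $M_p$ under $p$-products. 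Your construction instead couples the chain directly, putting mass $\rho(\omega)\mu_\omega(dx)$ on pairs $(\omega,x)$ and transitioning through a fixed optimal coupling per unordered edge; this is more probabilistic, gives the one-step equality $\E d_X^p=\E d_{W_p}^p$ on the nose, and the $T$-step bound follows since, conditionally on the base path, $(x_0,x_T)$ is a coupling of $\mu_{Z_0}$ and $\mu_{Z_T}$ (your telescoping of the conditional marginals is what makes this work). The price is the finiteness issue you flag, but it is more routine than you suggest: since the base state space $S$ is finite, the walk visits only finitely many measures, so you replace each of them by a finitely supported measure within $\varepsilon$ in $W_p$, note that all one-step and $T$-step quantities change by at most $2\varepsilon$ via the triangle inequality, apply your finite construction, and let $\varepsilon\to0$ (the degenerate case $\EE_p(W,1)=0$ forces $W_T=W_0$ a.s.\ by stationarity and is vacuous). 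This is the same kind of density argument the paper itself invokes when it writes $M_p(\PPP_p(X),T)=\sup_k M_p(I_k,T)$, so your proposal is complete in substance; in exchange for that extra approximation step you avoid the Birkhoff--von Neumann embedding and handle non-uniform atomic masses directly.
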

 
Theorem \ref{ThmCases} is a merger of propositions.
See Proposition \ref{MarkovCovering} for the case (\ref{FSC}), Proposition \ref{Markovquotient} for the case (\ref{FGC}) and Proposition \ref{WassMarkov} for the case  (\ref{WS}). 

Though the finiteness assumptions in Theorem \ref{ThmCases}(\ref{FSC}, \ref{FGC}) may seem unnatural they can not be dropped, see Example \ref{FiniteIsNes}. 


Recall that a Riemannian manifold $(M, g)$ is called \textit{flat} if it is locally isometric to the Euclidean space. 
As an application of Theorem \ref{ThmCases}(\ref{FSC}) we show that compact flat manifolds have Markov type $2$ with constant $1$.


\begin{theorem}\label{Flatis21}
Compact flat Riemannian manifolds have Markov type $2$ with constant $1$. In particular $M_2(S^1) = 1$, where $S^1$ is a circle with its intrinsic metric.
\end{theorem}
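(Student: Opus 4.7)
My plan is to reduce to flat tori via a classical covering theorem and then handle the torus case directly by lifting chains to the Euclidean universal cover, using reversibility together with a spectral positivity argument to obtain the sharp constant. By Bieberbach's theorem, every compact flat Riemannian manifold $M$ of dimension $n$ admits a finite-sheeted locally isometric covering $T^n \to M$, where $T^n = \R^n/\Lambda$ is a flat torus associated with the finite-index translation subgroup $\Lambda$ of $\pi_1(M)$. Applying Theorem \ref{ThmCases}(\ref{FSC}) gives $M_2(M) \le M_2(T^n)$, so it suffices to show $M_2(T^n, T) \le 1$ for every $T \in \n$.

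Fix a stationary reversible Markov chain $\{Z_t\}_{t=0}^T$ on $T^n$ with stationary distribution $\mu$ and transition operator $P$. Let $g(x,y) \in \R^n$ denote a shortest representative of $y - x \bmod \Lambda$, with random tie-breaking on the measure-zero Voronoi walls arranged so that $g(y,x) = -g(x,y)$. Set $\xi_t := g(Z_{t-1}, Z_t)$ and $\tilde Z_t := \tilde Z_0 + \xi_1 + \cdots + \xi_t$, a lift of $Z_t$ to $\R^n$ satisfying $|\xi_t| = d(Z_{t-1}, Z_t)$ and $d(Z_0, Z_T) \le |\tilde Z_T - \tilde Z_0|$. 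Expanding the squared norm, the target inequality reduces to showing
$$\sum_{1 \le s < t \le T} \E\bigl[\langle \xi_s, \xi_t\rangle\bigr] \le 0.$$

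A direct computation using reversibility $\mu(x) p(x,y) = \mu(y) p(y,x)$ together with the antisymmetry $g(y,x) = -g(x,y)$ identifies $\E[\langle \xi_s, \xi_t\rangle] = -\langle m, P^{|t-s|-1} m\rangle_{L^2(\mu;\R^n)}$, where $m(x) := \E[\xi_1 \mid Z_0 = x]$. Summing, the cross-covariance sum equals $-\langle m, \phi_T(P) m\rangle$ with $\phi_T(x) := \sum_{h=1}^{T-1}(T-h) x^{h-1}$. The algebraic identity $(1-x)^2 \phi_T(x) = (T-1) - Tx + x^T$, combined with a short calculus check (the right-hand side has a double zero at $x = 1$ and is monotone on $[-1,1]$), shows $\phi_T \ge 0$ on $[-1,1]$. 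Since $P$ is self-adjoint with spectrum contained in $[-1, 1]$, the operator $\phi_T(P)$ is positive semidefinite, so the cross-covariance sum is non-positive, yielding $M_2(T^n, T) \le 1$. The main obstacle is this spectral positivity step: a naive Cauchy-Schwarz bound on $|\sum \xi_t|^2$ only produces the trivial $M_2 \le \sqrt T$, and the sharp constant $1$ requires precisely the cancellation supplied by reversibility through the positivity of $\phi_T$; the random tie-breaking for $g$ on Voronoi walls is a technical but measure-zero concern.
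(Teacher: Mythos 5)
Your proposal is correct, but it proves the torus case by a genuinely different route than the paper. You share the first step (Bieberbach plus Theorem \ref{ThmCases}(\ref{FSC}) to reduce to a flat torus $\R^n/\Lambda$), but from there the paper proceeds ``softly'': it lifts the walk along the coverings $k\TT^d\to\TT^d$ to make the steps arbitrarily short, then composes with a Nash $C^1$ isometric embedding $\Phi:\TT^d\to\R^{2d}$, uses that intrinsic and chordal distances agree up to a factor $1+\ee$ at small scales, invokes the black box $M_2(\R^{2d})=1$, and lets $\ee\to 0$. You instead lift the increments themselves to the universal cover: with an antisymmetric choice $g$ of shortest representative, $\xi_t=g(Z_{t-1},Z_t)$ satisfies $|\xi_t|=d(Z_{t-1},Z_t)$ and $d(Z_0,Z_T)\le|\xi_1+\dots+\xi_T|$, and reversibility gives $\E\langle\xi_1,\xi_{1+h}\rangle=-\langle m,P^{h-1}m\rangle_{L^2(\pi)}$ with $m(i)=\E[\xi_1\mid Z_0=i]$, so the cross terms equal $-\langle m,\phi_T(P)m\rangle$ with $\phi_T(x)=\sum_{h=1}^{T-1}(T-h)x^{h-1}$; the identity $(1-x)^2\phi_T(x)=(T-1)-Tx+x^T$ indeed shows $\phi_T\ge 0$ on $[-1,1]$, and self-adjointness of $P$ finishes the bound $\EE_2(W,T)\le T\,\EE_2(W,1)$ exactly, with no $\ee$. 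In effect you re-run Ball's spectral proof of $M_2(L^2)=1$ at the level of increments, which buys a self-contained, sharp argument that avoids both the Nash--Kuiper theorem and the rescaled-cover limiting step; the paper's route buys brevity by reusing its covering machinery and the Euclidean constant as black boxes. Two cosmetic caveats: the chains in Definition \ref{MTT} live on a finite state space $S$ with a map $f$ into $\TT^n$, so $m$ should be defined on $S$ (conditioning on $Z_0$, not on $W_0$), and since the walk is atomic the cut-locus pairs need not be ``measure zero'' --- but a fixed antisymmetric selection of shortest representatives for each ordered pair is all you need, so neither point affects the argument.
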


This answers the question raised by S.-I. Ohta and M. Pichot \cite{OP}, see also \cite{ANN}.

Let $X$ be an Alexandrov space of nonnegative curvature and $p = 2$. In each of the cases dealt with in Theorem \ref{ThmCases} the space $Y$ is also nonnegatively curved in the sense of Alexandrov (see \cite{BGP} Section 4.6 for the cases (1), (2), and \cite{SturmGMMS} Proposition 2.10.iv for the case (\ref{WS})). Hence we can consider Theorem \ref{ThmCases} as a supporting evidence for the affirmative answer to Question \ref{Quest}. 


For a metric space $X$, we denote by $\PPP_p(X)$ the $p$-Wasserstein space over $X$, see Section \ref{SecWass}. 
As a consequence of Theorem \ref{ThmCases}(\ref{FGC},\ref{WS}) we obtain the following upper bounds on Markov type $p$ constants for the $p$-Wasserstein space over Euclidean space $\R^d$.

\begin{Corollary}\label{WEucl}
For every $p \in (2 , \infty)$ and $T, d \in \N$ we have
\begin{enumerate}
 \item{$M_p(\PPP_p(\R^d), T) \le 16d^{\frac{1}{2} - \frac{1}{p}}p^{\frac{1}{2}}T^{\frac{1}{2} - \frac{1}{p}},$}\label{WpEucl}
 \item{$M_2(\PPP_p(\R^d)) \le 4d^{\frac{1}{2} - \frac{1}{p}}\sqrt{p-1}.$}  \label{W2Eucl}
\end{enumerate}
\end{Corollary}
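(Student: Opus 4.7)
The plan is to derive both bounds by reducing, via Theorem \ref{ThmCases}, to known Markov-type inequalities for $L^p$, while controlling the distortion between the Euclidean norm and the $\ell_p$ norm on $\R^d$.

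For part (\ref{WpEucl}), I will first apply Theorem \ref{ThmCases}(\ref{WS}) to obtain $M_p(\PPP_p(\R^d),T)\le M_p(\R^d,T)$. The standard norm inequality $\|x\|_p\le\|x\|_2\le d^{1/2-1/p}\|x\|_p$ on $\R^d$ shows that the identity map $(\R^d,\|\cdot\|_2)\to(\R^d,\|\cdot\|_p)$ is a bi-Lipschitz bijection with distortion $d^{1/2-1/p}$, and a direct comparison of the defining expectations of Markov type gives $M_p(\R^d,T)\le d^{1/2-1/p}M_p(\ell_p^d,T)$. Combined with the isometric inclusion $\ell_p^d\hookrightarrow L^p$, this reduces matters to an estimate of the form $M_p(L^p,T)\lesssim\sqrt{p}\,T^{1/2-1/p}$. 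To establish it, I plan to couple the reversible chain with a stationary martingale in $L^p$ in the spirit of Naor--Peres--Schramm--Sheffield, apply an $L^p$-valued martingale inequality in the $2$-smooth space $L^p$ (whose smoothness constant is of order $\sqrt{p-1}$), and extract the $T^{p/2}$ factor by converting $(\sum\|\Delta M_i\|^2)^{p/2}$ into $T^{p/2-1}\sum\|\Delta M_i\|^p$ via Jensen's inequality.

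For part (\ref{W2Eucl}), I will exploit Theorem \ref{ThmCases}(\ref{FGC}). By Birkhoff's theorem, the subspace $\PPP^{(n)}_p(\R^d)\subset\PPP_p(\R^d)$ of equally weighted $n$-atom measures is isometric to the quotient $(\R^d)^n/S_n$, where $(\R^d)^n$ carries the norm $\|(x_1,\dots,x_n)\|^p=\tfrac{1}{n}\sum_i\|x_i\|_2^p$ and the finite symmetric group $S_n$ acts by coordinate permutation as a group of isometries. Hence $M_2(\PPP^{(n)}_p(\R^d),T)\le M_2((\R^d)^n,T)$; applying the same bi-Lipschitz comparison $\|\cdot\|_2\le d^{1/2-1/p}\|\cdot\|_p$ in each $\R^d$-coordinate bounds this by $d^{1/2-1/p}M_2(\ell_p^{nd},T)\le d^{1/2-1/p}M_2(L^p,T)$, and the Naor--Peres--Schramm--Sheffield estimate $M_2(L^p)\le\sqrt{p-1}$ yields $M_2(\PPP^{(n)}_p(\R^d))\le d^{1/2-1/p}\sqrt{p-1}$ uniformly in $n$. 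Density of equally weighted finite-atom measures in $\PPP_p(\R^d)$ then lets me approximate the finitely many values of any map $f:\Omega\to\PPP_p(\R^d)$ simultaneously in some common $\PPP^{(N)}_p(\R^d)$, extending the estimate to the full Wasserstein space; the factor $4$ in the statement absorbs this approximation slack.

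The hard part will be the $p$-th-moment Markov-type estimate in $L^p$ used in part (\ref{WpEucl}): unlike the $2$-moment inequality $M_2(L^p)\le\sqrt{p-1}$, it is not a direct consequence of $2$-smoothness and requires genuine higher-moment control, which is the source of both the $T^{1/2-1/p}$ scaling and the $\sqrt p$ constant. By contrast, the reductions via Theorem \ref{ThmCases} and the $\ell_2$-vs-$\ell_p$ comparison are essentially formal bookkeeping.
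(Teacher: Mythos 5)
Your reductions are exactly those of the paper: part (\ref{WpEucl}) via Theorem \ref{ThmCases}(\ref{WS}) (Proposition \ref{WassMarkov}) plus the $d^{\frac12-\frac1p}$ comparison of $\ell_2^d$ with $\ell_p^d$, and part (\ref{W2Eucl}) via the identification of equally weighted atomic measures with $(\R^d_p)^n_p/S_n$ (Lemma \ref{embed}), the finite-quotient bound (Proposition \ref{Markovquotient}), and density. The differences lie in the two external inputs, and both are problematic as written. For part (\ref{WpEucl}), the paper never needs a vector-valued estimate: by Proposition \ref{remMarkovProducts}(2) one has $M_p(\R^d_p,T)=M_p(\R,T)$, and the scalar bound $M_p(\R,T)\le 16\,p^{\frac12}T^{\frac12-\frac1p}$ is exactly \cite[Theorem 4.5]{NPSS}. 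Your route instead requires $M_p(L^p,T)\lesssim \sqrt{p}\,T^{\frac12-\frac1p}$, which you only sketch and yourself call the hard part; moreover the sketch as phrased (2-smoothness constant $\sqrt{p-1}$ composed with a $p$-th moment martingale inequality) risks producing a constant of order $p$ rather than $\sqrt{p}$ unless you argue as for $L^p$ specifically (scalar Burkholder--Davis--Gundy applied pointwise, then Minkowski in $L^{p/2}$), and in any case the explicit constant $16$ in the statement is not recovered. So part (\ref{WpEucl}) has a genuine gap that the paper's soft argument (product proposition plus citation) avoids.

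For part (\ref{W2Eucl}), you quote $M_2(L^p)\le\sqrt{p-1}$, but the known bound (\cite[Theorem 1.2]{NPSS}, Proposition \ref{Magic}) is $M_2(L_p)\le 4\sqrt{p-1}$; the factor $4$ in the statement comes from there, not from approximation. Your plan to ``absorb approximation slack'' into the factor $4$ therefore does not close once the citation is corrected: with $4\sqrt{p-1}$ already spent, no slack remains. Fortunately none is needed: a Markov walk takes finitely many values, the images $I_k$ of $\Phi_{2^k}$ are nested with dense union, and an $\epsilon$-perturbation of those finitely many values shows $M_2(\PPP_p(\R^d_p))=\sup_k M_2(I_k)$ with no loss of constant, exactly as in the proof of Proposition \ref{WassMarkov}. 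With the citation fixed and the lossless limiting argument substituted, your part (\ref{W2Eucl}) coincides with the paper's proof.
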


The upper bound for Markov type $2$ constant of $\PPP(\R^d)$ given by Corollary \ref{WEucl}(\ref{W2Eucl}) imply certain extension theorem for partial Lipschitz maps from     
$\PPP(\R^d)$ into $\cat(0)$ spaces, uniformly convex Banach spaces or more generally metric spaces with metric Markov cotype $2$ 
(See  \cite[Theorem 1.11, Corollary 1.13]{MN}).

For a metric space $(X, d_X)$  and $\aa \in (0, 1]$, the metric space $(X,d^\aa_X)$ is called  the \textit{$\aa$-snowflake} of $(X,d_X)$.

 As observed by A. Andoni, A. Naor and O. Neiman (see \cite[Section 3]{ANN}) an upper bound for $M_p(\PPP_p(\R^d),T)$ implies certain restriction on the embeddability of snowflakes into $\PPP_p(\R^d)$. Namely we have the following corollary improving the estimate in \cite[Theorem 2]{ANN}.

\begin{Corollary}\label{Distortion}
For every $n > 1$ there exists an $n$-point metric space $X_n$ such that for every $\alpha \in (\frac{1}{2},1]$, every $p \in (2,\infty)$ and every $d \in \n$ the $\alpha$-snowflake of $X_n$ does not admit an embedding to $\PPP_p(\R^d)$ with the bi-Lipschitz distortion less then $Cd^{-\frac{1}{2} + \frac{1}{p}}p^{-\frac{1}{2}}(\log n)^{\alpha - \frac{1}{2}}$, where $C > 0$ is an absolute constant.
\end{Corollary}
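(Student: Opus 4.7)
The plan is the standard reduction from Markov type bounds at finite time to snowflake distortion lower bounds (see \cite[Section 3]{ANN}), taking $X_n$ to be (the supporting set of) a Hamming cube and feeding in the sharper estimate from Corollary \ref{WEucl}(\ref{WpEucl}).

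Set $m = \lfloor \log_2 n \rfloor$ and let $X_n$ be any $n$-point metric space containing an isometric copy of the Hamming cube $H_m = \{0,1\}^m$ in the $\ell_1$ metric; the remaining $n - 2^m$ points can be placed arbitrarily far away and will play no role. On $H_m$ I would use the stationary lazy simple random walk $\{Z_t\}$: at each step, stay with probability $1/2$, otherwise flip a uniformly random coordinate. This chain is reversible with uniform stationary distribution, and by translation invariance on $\Z_2^m$ the coordinates of $Z_t$ evolve independently. A direct computation gives $\Pr[Z_T(i) \ne Z_0(i)] = \tfrac{1}{2}(1-(1-1/m)^T)$, so for $T = m$ the random variable $d(Z_m, Z_0)$ is a sum of $m$ i.i.d.\ Bernoullis with mean $\asymp 1$, and Hoeffding concentration yields
$$\E[d(Z_m, Z_0)^q] \asymp m^q$$
(with constants depending only on $q$) for every $q \ge 1$, while trivially $\E[d(Z_1, Z_0)^q] = 1/2$.

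Now suppose $f \colon X_n^\alpha \to \PPP_p(\R^d)$ is a bi-Lipschitz embedding of distortion $D$; after rescaling we may assume $d(x,y)^\alpha \le d_{\PPP_p(\R^d)}(f(x), f(y)) \le D\, d(x,y)^\alpha$ for $x, y \in X_n$. Applying the defining Markov type inequality at time $T = m$ to the chain $\{f(Z_t)\}$ in $\PPP_p(\R^d)$ gives
$$\E[d(Z_m, Z_0)^{p\alpha}] \le \E[d_{\PPP_p(\R^d)}(f(Z_m), f(Z_0))^p] \le M_p(\PPP_p(\R^d), m)^p \cdot m \cdot D^p \cdot \E[d(Z_1, Z_0)^{p\alpha}].$$
Substituting the two moment estimates above and taking $p$-th roots yields
$$D \gtrsim \frac{m^{\alpha - 1/p}}{M_p(\PPP_p(\R^d), m)}.$$

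Plugging in Corollary \ref{WEucl}(\ref{WpEucl}), $M_p(\PPP_p(\R^d), m) \le 16\, d^{1/2 - 1/p} p^{1/2} m^{1/2 - 1/p}$, and using $m \asymp \log n$, the exponents of $m$ combine to give
$$D \gtrsim \frac{m^{\alpha - 1/p}}{d^{1/2 - 1/p} p^{1/2} m^{1/2 - 1/p}} = \frac{(\log n)^{\alpha - 1/2}}{d^{1/2 - 1/p}\, p^{1/2}},$$
which is the stated bound for a suitably small absolute constant $C$. The only nontrivial ingredient beyond Corollary \ref{WEucl}(\ref{WpEucl}) is the routine concentration computation on the cube; the main care is bookkeeping, in particular seeing that the snowflake exponent $\alpha$ and the Markov time exponent $1/2 - 1/p$ combine precisely to produce the exponent $\alpha - 1/2$ in $\log n$.
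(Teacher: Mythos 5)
Your route is essentially the paper's route, just unpacked: the paper proves this corollary by citing \cite[Lemma~16]{ANN} (restated as Lemma~\ref{Hamm}) with $\zeta=\frac{p/2-1}{p-1}$ and $K=16d^{\frac12-\frac1p}p^{\frac12}$ from Corollary~\ref{WEucl}(\ref{WpEucl}), together with the remark that the space produced by that lemma is the Hamming cube and hence does not depend on $p$; the proof of that cited lemma is exactly the cube random-walk computation you reproduce, down to the same cancellation of exponents $m^{\alpha-\frac1p}/m^{\frac12-\frac1p}=m^{\alpha-\frac12}$.

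One step in your write-up is incorrect as stated, although the quantity you need survives. For the walk ``stay with probability $\frac12$, otherwise flip one uniformly chosen coordinate'', the coordinates do \emph{not} evolve independently (at most one coordinate changes per step), so $d(Z_m,Z_0)$ is not a sum of i.i.d.\ Bernoullis and Hoeffding does not apply directly. The marginal formula $\Pr[Z_T(i)\ne Z_0(i)]=\frac12\bigl(1-(1-\frac1m)^T\bigr)$ is nevertheless correct, and since you only need a \emph{lower} bound on the $T$-step moment with constants of the form $c^{p\alpha}$ for an absolute $c$, Jensen suffices: $\E\, d(Z_m,Z_0)=\frac m2\bigl(1-(1-\tfrac1m)^m\bigr)\ge\frac{1-e^{-1}}{2}\,m$, hence $\E\, d(Z_m,Z_0)^{p\alpha}\ge\bigl(\frac{1-e^{-1}}{2}m\bigr)^{p\alpha}$ because $p\alpha\ge 1$. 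Note that you cannot instead switch to the product chain in which every coordinate flips independently with probability $\frac1{2m}$: that would restore independence but destroy your identity $\E\, d(Z_1,Z_0)^{p\alpha}=\frac12$, since the $p\alpha$-th moment of $\mathrm{Bin}(m,\frac1{2m})$ grows with $p$ and would degrade the $p^{-1/2}$ factor in the final bound. With the Jensen fix, the rest of your bookkeeping is fine: the one-step moment is exactly $\frac12$ for the lazy single-flip walk, all constants enter raised to powers at most $1$ after taking $p$-th roots and so remain absolute, and restricting the embedding to the isometric copy of the cube inside $X_n$ legitimately reduces general $n$ to $n=2^m$ with $m\asymp\log n$.
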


Corollary \ref{Distortion} answers the question posed by A. Andoni, A. Naor and O. Neiman, see \cite[Question 23]{ANN}. 

\subsection*{Organization of the paper}
Definitions, preliminaries and notation are discussed in Section \ref{PrNt}.  
Lemmas for lifts of Markov chains are given in Section \ref{LoMC}.
Section \ref{SecCover} is devoted to results related to finite-sheeted locally isometric coverings. It contains the proof of Theorem \ref{ThmCases}(\ref{FSC}) (see Proposition \ref{MarkovCovering}) and the proof of Theorem \ref{Flatis21}. 
Section \ref{AoFG} is an analogue of Section \ref{SecCover} for quotients by finite groups of isometries,  it gives the proof of Theorem \ref{ThmCases}(\ref{FGC}), see Proposition \ref{Markovquotient}.
In Section \ref{SecWass} we present results related to Wasserstein spaces and a proof of Theorem \ref{ThmCases}(\ref{WS}), see Proposition \ref{WassMarkov}.
In Section~\ref{L3W} we prove Corollaries \ref{WEucl} and  \ref{Distortion}.
Section \ref{Counter} also contains counter examples, a conjecture and some additional results about lifts of Markov chains. 

\subsection*{Acknowledgements}
I thank my advisor Sergey V. Ivanov for all his ideas, advice and continuous support.
I am grateful to Prof. Assaf Naor for valuable comments on the preliminary version of the paper which result in particular in Corollary \ref{WEucl}(\ref{W2Eucl}).
 The paper is supported by the Russian Science Foundation under grant 16-11-10039.

\section{Definitions, preliminaries and notation}\label{PrNt}

Let $\{Z_t\}_{t=0}^{\infty}$ be a Markov chain on a finite state space $S$ with transition probabilities $a_{ij} = Pr[Z_{t+1} = j| Z_{t} = i]$, $i,j \in S$. The Markov chain $\{Z_t\}_{t=0}^{\infty}$ is said to be \textit{stationary} if $\pi_{i} = Pr[Z_t = i]$ does not depend on $t = 0,1,2,\dots$ 
A stationary Markov chain $\{Z_t\}_{t=0}^{\infty}$ is said to be \textit{reversible} if $\pi_ia_{ij} = \pi_j a_{ji}$, for every $i,j \in S$.

In order to construct a stationary reversible Markov chain on a finite set $S$, it suffices to define a nonnegative vector $(\pi_i)_{i \in S}$ and a nonnegative matrix $(a_{ij})_{i,j \in S}$ and verify that
\begin{enumerate}[(\thesection .1)]
\item{vector $\pi$ is stochastic, i.e. $\sum_{i \in S}\pi_i  = 1$,}\label{stvector}
\item{matrix $a$ is stochastic, i.e $\sum_{j \in S} a_{ij} = 1$, for every $i \in S$,} \label{stmatrix}
\item{$\pi_{i} a_{ij} = \pi_{j} a_{ji}$, for every $i, j \in S$.}\label{Estationary}
\end{enumerate}
The property (\ref{PrNt}.\ref{Estationary}) provides both stationarity and reversibility.  


Recall that a sequence of random variables $W = \{W_t\}_{t=0}^{\infty}$ on a set $X$ is called a \textit{random walk}.

\begin{definition}\label{DefMarkovWalk} \label{LiftChain}
We say that random walk $W$ on a set $X$ is a \textit{Markov walk} if there exists a stationary reversible Markov chain $\{Z_t\}_{t=0}^{\infty}$ on a finite state space $S$ and a map $f:S \rightarrow X$ such that $W_t = f(Z_t)$.

We say that Markov walks $W$ and $\wt W$ on a set $X$ are equivalent if the probability measures on the space of sequences $X^\infty = \{\{x_i\}_1^\infty|x_i \in X\}$  induced by $W$ and $\wt W$ coincide.
\end{definition}

\begin{notation}\label{notA}
Let $Z$ be a stationary reversible Markov chain on a finite state space $S$, and $S_0,\dots,S_T \subset S$. We denote by $A^{Z}(S_0,\dots,S_T)$ the probability $Pr[Z_0  \in S_0,\dots, Z_T \in S_T]$. For $s \in S$ and $S_1 \subset S$ we denote by $P^{Z}(s, S_1)$ the conditional probability $Pr[Z_1 \in S_1|Z_0 = s]$.
\end{notation}

For a Markov walk  $W$ on a metric space $X$ and  $T \in \n$,
we denote by $\EE_p(W, T)$ the expectation $\E d(W_T,W_0)^p$ . 


The following definition is a slightly reworded version of the one  in \cite[Section 3]{ANN}.


\begin{definition}\label{MTT}
Let $X$ be a metric space, $T \in \n$ and $p \ge 1$.  
The \textit{Markov type $p$ constant at time $T$} of $X$, denoted by $M_p(X,T)$ is  the infimum of all $K > 0$ such that for every Markov walk $W$  
on $X$,
$$
\EE_p(W, T) \le K^p T \EE_p(W, 1).
$$
The Markov type $p$ constant $M_p(X)$ is defined by 
$$M_p(X) = \sup_{T \in \n}{M_p(X,T)} \in [1,\infty].$$
We say that $X$ has Markov type $p$ if $M_p(X) < \infty$. 
\end{definition} 

%


For  a metric space $X$ we denote by $diam(X)$ the diameter of $X$,
and by $Iso(X)$ the group of isometries of $X$. 

Let $X$ and $Y$ be metric spaces, and $p \ge 1$. We write $X \times_p Y$ to denote the $p$-product space, i.e., the space with the distance 
defined by formula
$$d((x_1,y_1),(x_2,y_2))^p = d_X(x_1,x_2)^p + d_Y(y_1,y_2)^p.$$ 

We write $X^n_p$ to denote the $n$th $p$-power of $X$, i.e.,
$$X^n_p = X \times_p X \times_p \dots \times_p X\text{ ($n$ times)}.$$ 

The symmetric group $S_n$ acts on $X^n_p$ by permutation of coordinates. We denote by $X^n_p/S_n$ the corresponding quotient metric space. 
 
For $c >0$ we write $cX$  to denote the space with scaled metric, where the distance is defined by formula 
$$d_{cX}(x,y) = cd_{X}(x,y).$$ 

The following proposition immediately follows from the definitions 
\begin{proposition}\label{remMarkovProducts}
For $X$, $Y$ metric spaces, $p \ge 1$, $c > 0$, $n \in \n$ and  $T \in \n$ we have 
\begin{enumerate}
\item{$M_p(X \times_p Y, T) = \max\{M_p(X,T),M_p(Y,T)\}$,} 
\item{$M_p(X^n_p, T) = M_p(X,T)$,} 
\item{$M_p(cX, T) = M_p(X, T)$.} 
\end{enumerate} 
\end{proposition}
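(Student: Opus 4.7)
The proof is essentially an unpacking of definitions, so I will organize the plan around the three claims and address them in increasing complexity.

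First I would handle (3), the scaling invariance, which is the most transparent. A random walk on the underlying set $X$ is the same combinatorial object as a random walk on $cX$: the same maps from the same Markov chain. But in $cX$ every distance is multiplied by $c$, so $\EE_p(W,T)$ computed in $cX$ equals $c^p$ times $\EE_p(W,T)$ computed in $X$, and the same holds at time $1$. The inequality $\EE_p(W,T) \le K^p T\, \EE_p(W,1)$ therefore holds in $cX$ for exactly the same constants $K$ as in $X$, whence the infima agree.

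Next I would prove (1). The key observation is that if $W = \{W_t\}$ is a Markov walk on $X \times_p Y$ with $W_t = f(Z_t)$ for a stationary reversible chain $Z$ on a finite state space $S$ and $f = (f_X, f_Y) \colon S \to X \times Y$, then the projections $W^X_t := f_X(Z_t)$ and $W^Y_t := f_Y(Z_t)$ are Markov walks on $X$ and $Y$ driven by the same chain $Z$. The $p$-product metric gives
\[
\EE_p(W,T) = \EE_p(W^X,T) + \EE_p(W^Y,T),
\]
and identically with $T$ replaced by $1$. Applying the definition of $M_p(X,T)$ and $M_p(Y,T)$ to $W^X$ and $W^Y$ respectively and adding yields
\[
\EE_p(W,T) \le \max\{M_p(X,T),M_p(Y,T)\}^p \cdot T\, \EE_p(W,1),
\]
so $M_p(X\times_p Y,T) \le \max\{M_p(X,T),M_p(Y,T)\}$. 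For the reverse inequality, given any Markov walk $V$ on $X$ and a base point $y_0 \in Y$, the walk $t \mapsto (V_t,y_0)$ is a Markov walk on $X\times_p Y$ with the same $\EE_p$ values as $V$; this shows $M_p(X\times_p Y,T) \ge M_p(X,T)$, and by symmetry also $\ge M_p(Y,T)$.

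Finally, (2) follows from (1) by a trivial induction on $n$, writing $X^n_p = X \times_p X^{n-1}_p$ and noting that the maximum of $n$ copies of $M_p(X,T)$ is $M_p(X,T)$. There is no real obstacle here: the only mildly delicate point is the observation that post-composing the encoding map $f$ with a projection still produces a Markov walk, which is immediate from Definition \ref{DefMarkovWalk} since the underlying chain $Z$ is unchanged.
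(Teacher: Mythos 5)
Your proof is correct and is exactly the routine verification the paper has in mind: the paper states Proposition \ref{remMarkovProducts} without proof, declaring it immediate from the definitions. Your argument---projecting a Markov walk on $X\times_p Y$ to the two factors through the same underlying chain, embedding each factor isometrically via a base point for the reverse inequality, inducting for $X^n_p$, and noting that the defining inequality is invariant under scaling the metric---supplies precisely the omitted details.
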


 
Let $U$, $V$ be two real-valued random variables. We write $U =_{st} V$, if $U$ and $V$ are equal in distribution.
For $U$ and $V$ defined on one probability space we write $U \overset{a.s.}{\le} V$ if $U \le V$ almost surely, i.e $Pr[U > V] = 0$. 

 
\begin{definition}\label{DefLiftMarkovWalk}
Let $X$ and $Y$ be two sets and let $\chi:X \rightarrow Y$ be a map. Let $\wt W$ and $W$ be Markov walks on $X$ and $Y$. We say that $\wt W$ is a \textit{lift} of $W$ along $\chi$, if Markov walks $\chi(\wt W)$ and $W$ are equivalent, see Definition \ref{DefMarkovWalk}. 

In the case when $X$ and $Y$ are metric spaces and in addition to the previous property we have $d(\wt W_1, \wt W_0) =_{st} d(W_1, W_0)$,  we say that $\wt W$ is a \textit{metric lift} of $W$ along $\chi$.

\end{definition}



\begin{proposition} \label{PropOfLift}
Let $X,Y$ be metric spaces and $\chi:X\rightarrow Y$ a short map.
Suppose that $\wt W$ is a metric lift of $W$ along $\chi$ then 
\begin{enumerate}
\item{$d(\wt W_1, \wt W_0) \overset{a.s.}{\le} diam(Y)$,} \label{Cdist} 
\item{$\frac{\EE_p(W, T)}{T\EE_p(W, 1)} \le \frac{\EE_p(\wt W, T)}{T\EE_p\wt W, 1)}$, for every $T \ge 2$ and every $p \ge 1$.}\label{LiftHasBiggerMC}
\end{enumerate}
\end{proposition}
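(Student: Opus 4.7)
The plan is to derive both parts directly from the definition of metric lift together with the fact that $\chi$ is short; no probabilistic machinery beyond equality in distribution and monotonicity of expectation should be needed.

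For part (\ref{Cdist}), since $W$ takes values in $Y$, the deterministic bound $d(W_1, W_0) \le diam(Y)$ holds everywhere on the sample space of $W$. Because $d(\wt W_1, \wt W_0) =_{st} d(W_1, W_0)$ by the definition of metric lift, the event $\{d(\wt W_1, \wt W_0) > diam(Y)\}$ has the same probability as $\{d(W_1, W_0) > diam(Y)\}$, namely zero. This handles (\ref{Cdist}) in one line.

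For part (\ref{LiftHasBiggerMC}), the strategy is to establish two auxiliary relations and then combine them:
\[
\EE_p(W, T) \le \EE_p(\wt W, T) \text{ for every } T \in \n, \qquad \EE_p(W, 1) = \EE_p(\wt W, 1).
\]
The first comes from $\chi$ being short: on the sample space of $\wt W$ we have $d(\chi(\wt W_T), \chi(\wt W_0)) \le d(\wt W_T, \wt W_0)$ pointwise, so raising to the $p$-th power and taking expectation yields $\E\, d(\chi(\wt W_T), \chi(\wt W_0))^p \le \EE_p(\wt W, T)$, and equivalence of the Markov walks $\chi(\wt W)$ and $W$ (Definition~\ref{DefMarkovWalk}) identifies the left-hand side with $\EE_p(W, T)$. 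The second is just the metric-lift condition $d(\wt W_1, \wt W_0) =_{st} d(W_1, W_0)$ raised to the $p$-th power and integrated. Dividing the first relation by $T \EE_p(W, 1) = T \EE_p(\wt W, 1)$ then produces (\ref{LiftHasBiggerMC}).

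I do not foresee a real obstacle; both statements are essentially tautological once the definitions are unpacked, and the assumption $T \ge 2$ is used only to distinguish $T$ from the normalizing time $T = 1$ in the denominator. The only minor nuisance is the degenerate case $\EE_p(W, 1) = 0$, which by the metric-lift equality forces $\EE_p(\wt W, 1) = 0$ and hence almost surely constant walks on both sides, so the inequality holds under the natural $0/0$ convention.
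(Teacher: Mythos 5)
Your argument is correct and follows essentially the same route as the paper's own proof: part (1) from the distributional equality $d(\wt W_1,\wt W_0)=_{st}d(W_1,W_0)$ combined with the trivial bound by $diam(Y)$, and part (2) from $\EE_p(\wt W,1)=\EE_p(W,1)$ together with $\EE_p(\wt W,T)\ge\EE_p(W,T)$ obtained from shortness of $\chi$ and the equivalence of $\chi(\wt W)$ with $W$. Your extra remark about the degenerate case $\EE_p(W,1)=0$ is a reasonable clarification but does not change the substance.
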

\begin{proof}
The first claim follows from the definition of a metric lift.
The definition of a metric lift also implies that,
$$\EE_p(\wt W, 1) = \EE_p(W, 1)\text{, for every }p \ge 1.$$
From the definition of a lift and the fact that $\chi$ is a short map we have,
$$\EE_p(\wt W, T) \ge \EE_p(W, T)\text{, for every }T \ge 2\text{ and every }p \ge 1.$$
Thus implies the second claim.
\end{proof}

The plan of the proof of Theorem \ref{ThmCases}(\ref{FSC}) is to show that every Markov walk on the base space can be lifted to the covering space and apply Proposition \ref{PropOfLift}(\ref{LiftHasBiggerMC}) to the lift.

\begin{definition} \label{DefE}
For a stationary reversible Markov chain  $\{\wt{Z}_t\}_{t=0}^\infty$ on $\wt S$ we say that $\wt Z$ is 
\textit{restricted by} $E \subset \wt{S} \times \wt{S}$ if $A^{\wt Z}(\{x\},\{y\}) = 0$, for every $x,y \in \wt{S}$ such that $(x,y) \notin E$, see Notation \ref{notA}.
\end{definition}

Let $S$, $\wt{S}$ be finite sets, $E \subset \wt{S} \times \wt{S}$ be a symmetric subset and $\ss: \wt{S} \rightarrow S$ be a map. For $x \in \wt{S}$ and $V \subset \wt{S}$ we denote by $\deg_E(x,V)$ the number of elements of $\{y \in V : (x,y) \in E\}$. 
The following definition provides a condition on $E$ which implies that every stationary reversible Markov chain on $S$ admits a lift along $\ss$ restricted by $E$, see Lemma \ref{LiftMarkovChain}. 

\begin{definition}
We say that $\ss$ is \textit{regular with respect to} $E$ if 
$\deg_E(x,\ss^{-1}(s)) = \deg_E(y,\ss^{-1}(s)) \not = 0,$ for every $s \in S$ and every $x,y \in \wt S$ such that $\ss(x) = \ss(y)$.
\end{definition}

\section{Lifts of Markov chains}\label{LoMC}

The next lemma provides a sufficient condition for being  a lift of a Markov chain. A more complicated argument shows that this condition is also necessary, see Lemma \ref{SimpleChain}.


\begin{lemma}\label{LiftFromLoc}
Let $\{Z_t\}_{t=0}^\infty$ and $\{\wt Z_t\}_{t=0}^\infty$ be stationary reversible Markov chains on finite sets $S$ and $\wt S$ and $\ss:\wt{S}\rightarrow S$ a map such that,
\begin{enumerate}
\item{$A^{\wt Z}(\ss^{-1}(s)) = A^{Z}(\{s\})$, for every $s \in S$,}\label{0stepLFL}
\item{$P^{\wt Z}(\wt s_1, \ss^{-1}(s_2)) = P^{Z}(\ss(\wt s_1), \{s_2\})$, for every $\wt s_1 \in \wt S, s_2 \in S$.}\label{LocLFL}
\end{enumerate}
Then $\wt Z$ is a lift of $Z$ along $\ss$.
\end{lemma}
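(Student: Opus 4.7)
The plan is to show directly that the two Markov walks $\sigma(\wt Z)$ and $Z$ have identical finite-dimensional distributions, which by a standard Kolmogorov-type argument suffices to conclude that the induced measures on $S^\infty$ coincide. Concretely, I will prove by induction on $T \ge 0$ that for every sequence $s_0, s_1, \ldots, s_T \in S$,
\[
A^{\wt Z}\bigl(\sigma^{-1}(s_0), \sigma^{-1}(s_1), \ldots, \sigma^{-1}(s_T)\bigr) \;=\; A^{Z}\bigl(\{s_0\}, \{s_1\}, \ldots, \{s_T\}\bigr).
\]
The base case $T=0$ is precisely hypothesis (\ref{0stepLFL}) of the lemma.

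For the inductive step, assume the identity for $T-1$. Partition the event $\{\wt Z_{T-1} \in \sigma^{-1}(s_{T-1})\}$ according to the actual value of $\wt Z_{T-1}$ and apply the Markov property of $\wt Z$ at time $T-1$ to split off the last transition:
\[
A^{\wt Z}\bigl(\sigma^{-1}(s_0), \ldots, \sigma^{-1}(s_T)\bigr) = \sum_{\wt s \in \sigma^{-1}(s_{T-1})} P^{\wt Z}\bigl(\wt s, \sigma^{-1}(s_T)\bigr)\, A^{\wt Z}\bigl(\sigma^{-1}(s_0), \ldots, \sigma^{-1}(s_{T-2}), \{\wt s\}\bigr).
\]
The key point, which is where hypothesis (\ref{LocLFL}) enters, is that $P^{\wt Z}(\wt s, \sigma^{-1}(s_T))$ equals $P^{Z}(s_{T-1}, \{s_T\})$ uniformly in the choice of $\wt s \in \sigma^{-1}(s_{T-1})$. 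This uniformity lets me factor that probability out of the sum, and recombine the remaining sum into $A^{\wt Z}(\sigma^{-1}(s_0), \ldots, \sigma^{-1}(s_{T-1}))$. Applying the induction hypothesis and then the Markov property for $Z$ in reverse gives the right-hand side $A^{Z}(\{s_0\}, \ldots, \{s_T\})$.

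I do not anticipate a real obstacle: the two hypotheses are tailor-made so that (\ref{0stepLFL}) handles the initial distribution and (\ref{LocLFL}) handles one-step transitions through $\sigma$, and the Markov property then propagates the agreement to arbitrary length. The only point worth stating carefully is the uniformity described above—without it, the sum over fibers would not collapse cleanly—and the remark that matching all finite-dimensional cylinder probabilities of the form $A^{\wt Z}(\sigma^{-1}(s_0), \ldots, \sigma^{-1}(s_T))$ with those of $Z$ is precisely what it means for the pushforward measure of $\wt Z$ under $\sigma$ to agree with the law of $Z$ on $S^\infty$, hence for $\wt Z$ to be a lift of $Z$ along $\sigma$ in the sense of Definition \ref{DefLiftMarkovWalk}.
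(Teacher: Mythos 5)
Your proposal is correct and follows essentially the same route as the paper: the paper's proof also reduces the claim to matching the cylinder probabilities $A^{\wt Z}(\ss^{-1}(s_0),\dots,\ss^{-1}(s_T)) = A^{Z}(\{s_0\},\dots,\{s_T\})$, with hypothesis (\ref{0stepLFL}) giving the case $T=0$ and hypothesis (\ref{LocLFL}) driving the induction. You have merely written out the inductive step (the fiberwise decomposition and the uniformity that lets the transition probability factor out of the sum) which the paper leaves implicit.
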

\begin{proof}
We have to show that for every $T \in \n$ and every $s_0,\dots,s_T \in S$, 
$$A^{\wt Z}(\ss^{-1}(s_0), \ss^{-1}(s_1), \dots,\ss^{-1}(s_T)) =A^{Z}(\{s_0\}, \{s_1\}, \dots, \{s_T\}).$$
The property (\ref{0stepLFL}) provides the case $T  = 0$. The general case follows from (\ref{LocLFL}) by induction.  
\end{proof}

The following lemma is a main technical tool of the paper. 

\begin{lemma}\label{LiftMarkovChain}
Let $S$, $\wt{S}$ be finite sets, $\ss:\wt{S}\rightarrow S$ a regular map with respect to a symmetric  set $E \subset \wt{S} \times \wt{S}$ and $\{Z_t\}_{t=0}^\infty$ a stationary reversible Markov chain on $S$. Then there exists a stationary reversible Markov chain $\{\wt{Z}_t\}_{t=0}^\infty$ on $\wt{S}$ such that 
{$\wt{Z}$ is a lift of $Z$ along $\ss$ and}
{$\wt Z$ is restricted by $E$.}

\end{lemma}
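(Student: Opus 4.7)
The plan is to construct $\wt Z$ explicitly by prescribing its stationary distribution $\wt\pi$ on $\wt S$ and its transition matrix $\wt a$, and then to verify conditions (\ref{PrNt}.\ref{stvector})--(\ref{PrNt}.\ref{Estationary}) together with the two hypotheses of Lemma \ref{LiftFromLoc}. Write $\pi$ and $a$ for the stationary distribution and transition matrix of $Z$. The natural choice is to spread the mass $\pi_s$ uniformly over the fiber $\ss^{-1}(s)$ and to split each transition $a_{s_1 s_2}$ evenly among the $E$-edges leaving a given lift of $s_1$ into $\ss^{-1}(s_2)$. Concretely, I set
$$
\wt\pi_x \;=\; \frac{\pi_{\ss(x)}}{|\ss^{-1}(\ss(x))|},\qquad
\wt a_{xy} \;=\; \begin{cases} \dfrac{a_{\ss(x)\ss(y)}}{\deg_E(x,\ss^{-1}(\ss(y)))} & \text{if } (x,y)\in E,\\[3pt] 0 & \text{otherwise.} \end{cases}
$$
The regularity of $\ss$ with respect to $E$ ensures that the denominator is nonzero, so $\wt a$ is well defined.

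The stochasticity checks are routine: summing $\wt\pi$ over the fibers recovers $\sum_{s\in S}\pi_s = 1$, and using regularity the row-sum $\sum_y \wt a_{xy}$ telescopes to $\sum_{s_2 \in S} a_{\ss(x)s_2} = 1$. The main obstacle is reversibility, i.e.\ $\wt\pi_x\wt a_{xy}=\wt\pi_y\wt a_{yx}$. Writing $s_1 = \ss(x)$ and $s_2 = \ss(y)$, the identity $\pi_{s_1}a_{s_1 s_2} = \pi_{s_2}a_{s_2 s_1}$ for $Z$ reduces the desired equality to
$$
|\ss^{-1}(s_1)|\cdot\deg_E(x,\ss^{-1}(s_2)) \;=\; |\ss^{-1}(s_2)|\cdot\deg_E(y,\ss^{-1}(s_1)).
$$
I obtain this by double-counting the set $E\cap(\ss^{-1}(s_1)\times\ss^{-1}(s_2))$: summing $\deg_E(\,\cdot\,,\ss^{-1}(s_2))$ over $\ss^{-1}(s_1)$ gives the left-hand side (the summand is independent of the base point by regularity), while the analogous sum over $\ss^{-1}(s_2)$, which equals the previous one by the symmetry of $E$, gives the right-hand side.

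Once the three defining conditions are in hand, $\wt Z$ is a stationary reversible Markov chain on $\wt S$, and it is restricted by $E$ by construction since $\wt a_{xy} = 0$ whenever $(x,y)\notin E$. The first hypothesis of Lemma \ref{LiftFromLoc} is immediate from the definition of $\wt\pi$, and the second coincides with the row-sum computation carried out above. Applying Lemma \ref{LiftFromLoc} then shows that $\wt Z$ is a lift of $Z$ along $\ss$, finishing the proof.
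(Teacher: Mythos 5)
Your proposal is correct and follows essentially the same route as the paper: the same uniform-splitting definition of $\wt\pi$ and $\wt a$, the same double-counting of $E\cap(\ss^{-1}(s_1)\times\ss^{-1}(s_2))$ via regularity and symmetry to get reversibility, and the same appeal to Lemma \ref{LiftFromLoc} to conclude that $\wt Z$ is a lift restricted by $E$.
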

\begin{proof}
Let $\pi_x$, $a_{xy}$ be the stationary distribution and  the transition matrix for $Z_t$. For $x \in \wt S$ we denote $\ss^{-1}(\ss(x))$ by $M_x$.
We define a Markov chain $\wt{Z}$ by a distribution $\wt{\pi}_x = \frac{\pi_{\ss (x)}}{|M_x|}$ and a transition matrix 
$$\wt{a}_{xy} = 
\begin{cases}
  \frac{a_{\ss(x)\ss(y)}}{\deg_E(x,M_y)}, (x,y) \in E, \\
  0, \ (x,y) \not \in E. \\
\end{cases}$$


First we are going to show that $\wt \pi_x, \wt a_{xy}$ correctly define a stationary reversible Markov chain, i.e. to check the
properties (\ref{PrNt}.\ref{stvector})-(\ref{PrNt}.\ref{Estationary}). Properties (\ref{PrNt}.\ref{stvector}), (\ref{PrNt}.\ref{stmatrix}) and the case $(x,y) \not \in E$ of (\ref{PrNt}.\ref{Estationary})  follows directly from the definitions of $\wt \pi$ and $\wt a$.

In order to verify the case $(x,y) \in E$ of (\ref{PrNt}.\ref{Estationary}) we have to show that 
$\wt \pi_x \wt a_{xy} = \wt \pi_y \wt a_{yx}$ for every $x,y \in \wt S$ such that ${(x,y) \in E}$.
Fix $x,y \in \wt S$, let $N$ be the number of elements of the set ${(M_x \times M_y) \cap E}$. 
Since $\ss$ is regular with respect to $E$ we have 
$$|M_x| \deg_E(x,M_y) = N = |M_y| \deg_E(y,M_x).$$ 
Thus,
$$\wt \pi_x \wt a_{xy} = \frac{\pi_{\ss (x)}}{|M_x|}\frac{a_{\ss(x)\ss(y)}}{\deg_E(x,M_y)} = \frac{\pi_{\ss (x)}a_{\ss(x)\ss(y)}}{N} = $$
$$= \frac{\pi_{\ss (y)}a_{\ss(y)\ss(x)}}{N}  = \frac{\pi_{\ss (y)}}{|M_y|}\frac{a_{\ss(y)\ss(x)}}{\deg_E(y,M_x)} =  \wt \pi_y \wt a_{yx}.$$
As a result we have defined a stationary reversible Markov chain $\wt Z$.

Secondly, we have to show that $\wt Z$ is a lift of $Z$ along $\ss$.
From the definition of $\wt \pi$ we have
$$A^{\wt Z}(\ss^{-1}(s))  = \pi_s\text{, for every $s \in S$}.$$
and the definition of $\wt a$ provides 
$$P^{\wt Z}(x, \ss^{-1}(s)) = a_{\ss(x) s} \text{, for every $x \in \wt S, s \in S$}.$$ 
Applying Lemma \ref{LiftFromLoc} we obtain the claim.
\end{proof}


\section{Coverings and proof of Theorem \ref{Flatis21}}\label{SecCover}


The following lemma  implies Theorem \ref{ThmCases}(\ref{FSC}), see Proposition \ref{MarkovCovering}.

\begin{lemma}\label{GenLiftLemma}
Let $X,Y$ be geodesic spaces and $\chi: X \rightarrow Y$ a $k$-sheeted locally isometric covering. Then every Markov walk on $Y$ admits a metric lift along $\chi$ (see Definition \ref{DefLiftMarkovWalk}).

\end{lemma}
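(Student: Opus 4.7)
The plan is to reduce the statement to Lemma \ref{LiftMarkovChain}. Write the given Markov walk $W$ on $Y$ as $W_t = f(Z_t)$ for a stationary reversible Markov chain $Z$ on a finite state space $S$ and a map $f : S \to Y$. Introduce the finite set
$$\wt S := \{(s, x) \in S \times X : \chi(x) = f(s)\},$$
with projections $\sigma : \wt S \to S$, $\sigma(s, x) = s$, and $\wt f : \wt S \to X$, $\wt f(s, x) = x$. Each fiber of $\sigma$ has cardinality $k$, and $\chi \circ \wt f = f \circ \sigma$.

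For every ordered pair $(s_1, s_2) \in S \times S$ I would fix a minimizing geodesic $\gamma_{s_1 s_2}$ from $f(s_1)$ to $f(s_2)$ in $Y$, choosing $\gamma_{s_2 s_1}$ to be the reverse of $\gamma_{s_1 s_2}$ (and $\gamma_{ss}$ the constant path). By the path-lifting property of the covering $\chi$ together with the fact that $\chi$ is locally isometric, $\gamma_{s_1 s_2}$ lifts uniquely to a path in $X$ starting at any prescribed $x_1 \in \chi^{-1}(f(s_1))$, and this lift has the same length as $\gamma_{s_1 s_2}$. Let $\phi_{s_1 s_2}(x_1) \in \chi^{-1}(f(s_2))$ denote its endpoint; this gives a bijection $\phi_{s_1 s_2}$ with inverse $\phi_{s_2 s_1}$. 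Declare $((s_1, x_1), (s_2, x_2)) \in E$ iff $x_2 = \phi_{s_1 s_2}(x_1)$. The set $E$ is symmetric by construction, and for every $(s, x) \in \wt S$ and every $t \in S$ one has $\deg_E((s, x), \sigma^{-1}(t)) = 1$, so $\sigma$ is regular with respect to $E$. Applying Lemma \ref{LiftMarkovChain} then produces a stationary reversible Markov chain $\wt Z$ on $\wt S$ that is a lift of $Z$ along $\sigma$ and is restricted by $E$. Set $\wt W_t := \wt f(\wt Z_t)$.

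It remains to verify that $\wt W$ is a metric lift of $W$ along $\chi$ in the sense of Definition \ref{DefLiftMarkovWalk}. Since $\chi \circ \wt f = f \circ \sigma$, the equivalence of $\sigma(\wt Z)$ and $Z$ yields that $\chi(\wt W)$ and $W$ are equivalent Markov walks on $Y$. For the metric condition, whenever the pair $(\wt Z_0, \wt Z_1) = ((s_0, x_0), (s_1, x_1))$ lies in $E$, the lift of $\gamma_{s_0 s_1}$ starting at $x_0$ ends at $x_1$ and has length $d_Y(f(s_0), f(s_1))$. The key geometric point, which I expect to be the one step requiring care, is the equality $d_X(x_0, x_1) = d_Y(f(s_0), f(s_1))$: the inequality $\le$ follows from the lift being a path of that length joining $x_0$ and $x_1$ in $X$, while $\ge$ follows from $\chi$ being $1$-Lipschitz (a consequence of it being locally isometric). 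This gives $d_X(\wt W_0, \wt W_1) = d_Y(W_0, W_1)$ almost surely, which is strictly stronger than equality in distribution and completes the verification.
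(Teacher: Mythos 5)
Your proposal is correct and follows essentially the same route as the paper: the same fibered set $\wt S$ with projections $\ss$ and $\wt f$, the same device of fixing minimizing geodesics in $Y$ and lifting them to define a symmetric set $E$ with $\deg_E(\cdot,\ss^{-1}(s))=1$, and the same appeal to Lemma \ref{LiftMarkovChain} followed by the verification that $\wt W_t=\wt f(\wt Z_t)$ is a metric lift. Your phrasing of $E$ via the endpoint bijections $\phi_{s_1s_2}$ and your explicit two-sided argument for $d_X(x_0,x_1)=d_Y(f(s_0),f(s_1))$ are only cosmetic elaborations of the paper's argument.
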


\begin{proof}
Let $W$ be a Markov walk on on $Y$ given by $W_t = f(Z_t)$, where $\{Z_t\}_{t=0}^\infty$ is a stationary reversible Markov chain on a finite set $S$ and $f$ is a map from $S$ to $Y$.


Define $\widetilde{S} = \{(s,x) \in S \times X: \chi(x) = f(s)\}$. We denote the projections from $\widetilde{S}$ to $S$ and $X$  by $\ss$  and $\widetilde{f}$.
For each unordered pair $\{s_1,s_2\}$ of (not necessary different) elements of $S$ fix a minimizing geodesic $\gg_{s_1s_2}$ connecting $f(s_1)$ and $f(s_2)$. 
Let $E$ be a set of all pairs $(x_1,x_2) \in \wt S \times \wt S$ such that there exists a lift of $\gg_{\ss(x_1)\ss(x_2)}$ connecting $\wt f(x_1)$ and $\wt f(x_2)$.
Note that for every $(x_1,x_2) \in E$,
\begin{equation}d_X(\wt f(x_1),\wt f(x_2)) = d_Y(f(\ss(x_1)),f(\ss(x_2))).\label{GLLshort}\end{equation} 

The existence and uniqueness of covering paths implies that ${\deg_E(x,\ss^{-1}(s)) = 1}$, for every $x \in \wt S,s \in S$. Hence $\ss$ is a regular map with respect to $E$. Lemma \ref{LiftMarkovChain} provides the existence of a stationary reversible Markov chain $\wt Z_t$ on $\wt S$, such that 
\begin{enumerate}
\item{$\wt{Z_t}$ is a lift of $Z_t$ along $\ss$,}\label{GLLLift}
\item{$\wt Z$ is restricted by $E$ (see Definition \ref{DefE}).}\label{respectE}
\end{enumerate}

We define $\wt W$ by $\wt W_t = \wt f (\wt Z_t)$. The definitions of $\wt S$ and $\wt f$ imply that ${\chi  \circ  \wt f =  f \circ \ss}$. Hence, the equivalence of $\ss(\wt Z)$ and $Z_t$ implies the equivalence 
of $\chi(\wt W)$ and $W$. Finally $\wt W$ is a metric lift of $W$, which follows from properties (\ref{GLLLift}), (\ref{respectE}) and (\ref{GLLshort}).
\end{proof}

\begin{proposition}\label{MarkovCovering}
Let $X,Y$ be geodesic spaces. Let $\chi:X \rightarrow Y$ be a finite sheeted locally isometric covering. Then for every $p \ge 1$ and $T \in \n$ we have $M_p(X,T) \ge M_p(Y,T)$ and ${M_p(X) \ge M_p(Y)}$.
\end{proposition}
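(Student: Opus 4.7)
The plan is to assemble the machinery already established: given any Markov walk on $Y$, lift it metrically to $X$ via Lemma \ref{GenLiftLemma}, then compare the relevant energy ratios via Proposition \ref{PropOfLift}(\ref{LiftHasBiggerMC}).

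First I would check that $\chi$ is short, since that hypothesis is needed to invoke Proposition \ref{PropOfLift}(\ref{LiftHasBiggerMC}). Because $\chi$ is locally isometric and $X$ is a geodesic space, every rectifiable curve $\gamma$ in $X$ satisfies $L(\chi \circ \gamma) = L(\gamma)$; taking infima over curves joining two points of $X$ yields $d_Y(\chi(x_1), \chi(x_2)) \le d_X(x_1, x_2)$, so $\chi$ is $1$-Lipschitz.

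Now fix $p \ge 1$ and $T \in \N$ with $T \ge 2$, and let $W$ be an arbitrary Markov walk on $Y$. Lemma \ref{GenLiftLemma} produces a metric lift $\wt W$ of $W$ along $\chi$, which is itself a Markov walk on $X$. Proposition \ref{PropOfLift}(\ref{LiftHasBiggerMC}) then yields
$$\frac{\EE_p(W, T)}{T\, \EE_p(W, 1)} \le \frac{\EE_p(\wt W, T)}{T\, \EE_p(\wt W, 1)} \le M_p(X, T)^p,$$
where the last inequality is the definition of $M_p(X, T)$ applied to the Markov walk $\wt W$ on $X$. Rearranging gives $\EE_p(W, T) \le M_p(X, T)^p \cdot T \cdot \EE_p(W, 1)$, an inequality that also holds trivially when $\EE_p(W, 1) = 0$ (both sides vanish in that degenerate case). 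Since $W$ was arbitrary, this gives $M_p(Y, T) \le M_p(X, T)$ for $T \ge 2$; the case $T = 1$ is immediate from the definition. Taking the supremum over $T \in \N$ finally yields $M_p(Y) \le M_p(X)$. No serious obstacle arises here, since all the substantive work has been carried out in Lemma \ref{GenLiftLemma} and Proposition \ref{PropOfLift}.
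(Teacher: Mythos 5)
Your proposal is correct and follows essentially the same route as the paper: lift an arbitrary Markov walk on $Y$ to a metric lift on $X$ via Lemma \ref{GenLiftLemma} and conclude with Proposition \ref{PropOfLift}(\ref{LiftHasBiggerMC}). The extra details you supply (that a locally isometric covering between geodesic spaces is a short map, and the degenerate cases $\EE_p(W,1)=0$ and $T=1$) are exactly the points the paper leaves implicit, and they are handled correctly.
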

\begin{proof}
The statement follows from Lemma \ref{GenLiftLemma} and Proposition \ref{PropOfLift}.
\end{proof}


\begin{proof}[Proof of Theorem \ref{Flatis21}]
Let $X$ be a compact flat Riemannian manifold. By Bieberbach's Theorem \cite{Bie1,Bie2}, $X$ can be covered by a flat torus.
Thus by Proposition \ref{MarkovCovering} it suffices to consider the case $X = \TT^d$, where $\TT^d$ is a flat torus of dimension $d$.





Let $W$ be a Markov walk on $\TT^d$.
For every positive integer $k$ the flat torus $\TT^d$ admits a locally isometric $k^d$-sheeted covering by the scaled torus $k\TT^d$. 
Indeed, if $\TT^d = \R^d/\Gamma$ where $\Gamma$ is a lattice, then $k\TT^d = \R^d/k \Gamma$.
The natural quotient map $\R^d/k \Gamma \rightarrow \R^d/\Gamma$ is a desired covering map.

 By Lemma \ref{GenLiftLemma}, Proposition \ref{PropOfLift} and rescaling there exists a Markov walk $W^k$ on $\TT^d$ such that 
\begin{equation}{d(W^k_1, W^k_0) \overset{a.s.}{\le} \frac{diam(\TT^d)}{k},}\end{equation}
\begin{equation}{\label{FR}\frac{\EE_2(W, T)}{T\EE_2(W, 1)} \le \frac{\EE_2(W^k, T)}{T\EE_2(W^k, 1)}.}\end{equation}



By the Nash embedding theorem (see \cite{Nash}) there exists a (Riemannian) isometric $C^1$-map $\Phi:\TT^d \rightarrow \R^{2d}$. Then for every $\ee > 0$ there exists $\dd(\ee) > 0$ such that 
\begin{equation}d(x,y) \le (1+\ee)||\Phi(x)-\Phi(y)||,\label{aliso}\end{equation}
 for every pair of points $x,y \in \TT^d$ with $d(x,y) < \dd(\ee)$.

Fix $\ee > 0$ and $T \in \N$. Choose $k > \frac{diam(T^d)T}{\dd(\ee)}$. Then $d(W_1^k, W_0^k) < \dd(\ee)/T$. Hence,

\begin{equation}\EE_2(W^k, T) \overset{(\ref{aliso})}{\le} (1+\ee)^2\EE_2(\Phi \circ W^k, T) \le
 (1+\ee)^2T\EE_2(\Phi \circ W^k, 1) \le (1 +\ee)^2 T \EE_2(W^k, 1),\label{AlmostMarkov}\end{equation}
where the second inequality follows from $M_2(\R^{2d}) = 1$. Thus,

$$\frac{\EE_2(W, T)}{T\EE_2(W, 1)} \overset{(\ref{FR})}{\le} \frac{\EE_2(W^k, T)}{T\EE_2(W^k, 1)} \overset{(\ref{AlmostMarkov})}{\le} (1 +\ee)^2.$$

Since $\ee$ is arbitrary, it follows that 
$\EE_2(W, T) \le T \EE_2(W, 1).$   
Thus ${M_2(\TT^d) = 1}$ and Theorem \ref{Flatis21} follows.
\end{proof}


\section{Quotients by finite groups}\label{AoFG}


Recall that, a finite group $G$ acting by isometries on a metric space $X$ induces a quotient metric on $X/G$, given by ${d_{X/G}(\bar{x},\bar{y}) = \min_{x \in \bar{x},y \in \bar{y}}d_X(x,y)}$. The following lemma is an analogue of Lemma \ref{GenLiftLemma} for quotient maps.

\begin{lemma}\label{quotientLiftLemma}
Let $X$ be a metric space. Let $G$ be a finite subgroup of $Iso(X)$, and let $\chi: X \rightarrow X / G$ be the corresponding quotient map. Then every Markov walk on $ X / G$ admits a metric lift along $\chi$.
\end{lemma}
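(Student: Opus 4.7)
The plan is to mirror the proof of Lemma \ref{GenLiftLemma} from the covering case, with geodesic lifts replaced by \emph{optimal representatives} for the quotient distance. The technical engine is again Lemma \ref{LiftMarkovChain}, so my task is to construct an appropriate pair $(\sigma, E)$ and verify regularity.

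Let $W$ be a Markov walk on $X/G$ given by $W_t = f(Z_t)$, where $\{Z_t\}$ is a stationary reversible Markov chain on a finite set $S$ and $f : S \to X/G$. I would set
$$\widetilde S = \{(s,x) \in S \times X : \chi(x) = f(s)\},$$
with projections $\sigma : \widetilde S \to S$ and $\widetilde f : \widetilde S \to X$. The finite group $G$ acts on $\widetilde S$ by $g \cdot (s,x) = (s, gx)$, and this action is transitive on each fiber $\sigma^{-1}(s)$. Then I define
$$E = \bigl\{(x_1,x_2) \in \widetilde S \times \widetilde S : d_X(\widetilde f(x_1), \widetilde f(x_2)) = d_{X/G}(f(\sigma(x_1)), f(\sigma(x_2)))\bigr\},$$
which is symmetric by symmetry of the two metrics.

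The main verification is that $\sigma$ is regular with respect to $E$. Nonemptiness of $\deg_E(x, \sigma^{-1}(s))$ is immediate since $G$ is finite: the infimum in the quotient metric is always attained by some pair of representatives, so for every $x \in \widetilde S$ and every $s \in S$ there is at least one $x_2 \in \sigma^{-1}(s)$ with $(x,x_2) \in E$. For equality of degrees across a fiber, suppose $\sigma(x) = \sigma(y)$; then $y = g \cdot x$ for some $g \in G$. Because $g$ acts as an isometry on $X$ and permutes each fiber $\sigma^{-1}(s)$, the bijection $x_2 \mapsto g \cdot x_2$ of $\sigma^{-1}(s)$ preserves membership in $E$, using $\sigma(gx) = \sigma(x)$ and $d_X(\widetilde f(gx), \widetilde f(gx_2)) = d_X(\widetilde f(x), \widetilde f(x_2))$. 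Hence $\deg_E(x, \sigma^{-1}(s)) = \deg_E(y, \sigma^{-1}(s))$.

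Having verified regularity, Lemma \ref{LiftMarkovChain} produces a stationary reversible Markov chain $\widetilde Z$ on $\widetilde S$ that is a lift of $Z$ along $\sigma$ and restricted by $E$. Setting $\widetilde W_t = \widetilde f(\widetilde Z_t)$, the identity $\chi \circ \widetilde f = f \circ \sigma$ (built into the definition of $\widetilde S$) together with equivalence of $\sigma(\widetilde Z)$ and $Z$ yields equivalence of $\chi(\widetilde W)$ and $W$. The restriction by $E$ then forces $d_X(\widetilde W_1, \widetilde W_0) = d_{X/G}(\chi(\widetilde W_1), \chi(\widetilde W_0))$ almost surely, which combined with $(\chi(\widetilde W_0), \chi(\widetilde W_1)) =_{st} (W_0, W_1)$ gives the required equality in distribution. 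The only real place where work is needed is checking regularity, and that is where the hypothesis that $G$ acts by \emph{isometries} is used; everything else is a routine transcription of the covering argument.
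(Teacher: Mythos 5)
Your proposal is correct and follows essentially the same route as the paper: the same $\widetilde S$, the same choice of $E$ via pairs of representatives realizing the quotient distance, regularity of $\sigma$ deduced from the transitive isometric $G$-action on fibers (which the paper states more tersely), and then Lemma \ref{LiftMarkovChain} exactly as in the covering case.
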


\begin{proof}
The proof is similar to that of Lemma \ref{GenLiftLemma}, the only difference is the construction of the set $E$. 
Let $W_t$ be a Markov walk on on $X / G$ given by $W_t = f(Z_t)$, where $\{Z_t\}_{t=0}^\infty$ is a stationary reversible Markov chain on a finite set $S$ and $f$ is a map from $S$ to $X/G$.


Define $\widetilde{S} = \{(s,x) \in S \times X: \chi(x) = f(s)\}$. We denote the projections from $\widetilde{S}$ to $S$ and $X$  by $\ss$  and $\widetilde{f}$.
Let $E$ be a set of all pairs $(x_1,x_2) \in \wt S \times \wt S$ such that $d_X(\wt f(x_1),\wt f(x_2)) = d_{X/G}(f(\ss(x_1)),f(\ss(x_2)))$. 

Let  $s_1, s_2 \in S$ and $x_1,x_2 \in \ss^{-1}(s_1)$.   Since $\ss^{-1}(s_1)$ and $\ss^{-1}(s_2)$ are orbits of an isometric action of a finite group, we have ${\deg_E(x_1,\ss^{-1}(s_2)) = \deg_E(x_2,\ss^{-1}(s_2)) \not = 0}$. Hence $\ss$ is a regular map with respect to $E$. 

The rest of the proof is the same as in Lemma \ref{GenLiftLemma}.
\end{proof}

\begin{proposition}\label{Markovquotient}
Let $X$ be a metric space and $G$ be a finite subgroup of $Iso(X)$. Then for every $p \ge 1$ and every $T \in \n$ we have
$M_p(X,T) \ge M_p(X/G,T)$ and $M_p(X) \ge M_p(X/G)$.
\end{proposition}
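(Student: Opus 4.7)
The plan is to mirror the proof of Proposition \ref{MarkovCovering} verbatim, replacing Lemma \ref{GenLiftLemma} by its quotient analogue Lemma \ref{quotientLiftLemma}. Since the latter is already available and Proposition \ref{PropOfLift} is the generic mechanism that turns the existence of metric lifts into comparison of Markov type constants, almost nothing remains to be done.

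First I would check the one hypothesis Proposition \ref{PropOfLift} needs but which is not explicitly stated in our setup: namely, that $\chi: X \to X/G$ is a short map. This is immediate from the definition $d_{X/G}(\bar x, \bar y) = \min_{x \in \bar x,\, y \in \bar y} d_X(x,y)$, since the minimum is attained over a set that includes the pair $(x,y)$ for any chosen representatives.

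Next, given an arbitrary Markov walk $W$ on $X/G$, Lemma \ref{quotientLiftLemma} produces a metric lift $\wt W$ on $X$ along $\chi$. Proposition \ref{PropOfLift}(\ref{LiftHasBiggerMC}) then yields
$$\frac{\EE_p(W, T)}{T\,\EE_p(W, 1)} \le \frac{\EE_p(\wt W, T)}{T\,\EE_p(\wt W, 1)}$$
for all $T \ge 2$ and $p \ge 1$. For any $K > M_p(X,T)$, the right-hand side is bounded by $K^p$ by Definition \ref{MTT} applied to the Markov walk $\wt W$ on $X$, hence so is the left-hand side. Since $W$ was arbitrary, this shows $M_p(X/G,T) \le K$, and letting $K \downarrow M_p(X,T)$ gives $M_p(X/G,T) \le M_p(X,T)$. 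The case $T = 1$ is trivial (the ratio equals $1$), and taking supremum over $T \in \n$ gives $M_p(X/G) \le M_p(X)$.

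I do not foresee any obstacle: the real content was already absorbed into Lemma \ref{quotientLiftLemma} (in particular, the verification that orbits of a finite isometric action give a regular map, so that Lemma \ref{LiftMarkovChain} applies). The proposition itself is a one-line consequence, completely parallel to the proof of Proposition \ref{MarkovCovering}.
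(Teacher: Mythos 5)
Your proposal is correct and follows exactly the paper's route: the paper's proof of Proposition \ref{Markovquotient} is the same one-line deduction from Lemma \ref{quotientLiftLemma} and Proposition \ref{PropOfLift}, with your additional checks (shortness of the quotient map, the $T=1$ case, passing from the ratio bound to the constants) being routine details the paper leaves implicit.
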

\begin{proof}
The statement follows from Lemma \ref{quotientLiftLemma} and Proposition \ref{PropOfLift}.
\end{proof}

\section{Wasserstein spaces} \label{SecWass}

For reader's convenience we recall the definition of Wasserstein spaces. For further details see \cite{vil}. 

Let  $X$ be a metric space.
Let $p \ge 1$ and let $\mu$, $\nu$ be Borel probabalistic measures with finite $p$-th moment, i.e 
 $$\int_{X}d^p(x,o)d\mu(x) < \infty, \int_{X}d^p(x,o)d\nu(x) < \infty,$$
for some (hence all) $o \in X$.
We say that measure $q$ on $X \times X$ is a \textit{coupling} of $\mu$ and $\nu$ iff its marginals are $\mu$ and $\nu$, that is, iff 
$$q(A \times X) = \mu(A)\text{, }q(X \times A) = \nu(A),$$
for all Borel measurable subsets $A \subset X$.  
The $L^p$-Wasserstein distance between $\mu$ and $\nu$ is defined by 
$$d_{W_p}(\mu,\nu) = \inf\Big\{\Big(\int_{X \times X}{d^p(x,y)dq(x,y)}\Big)^{\frac{1}{p}}: \text{$q$ is a coupling of $\mu$ and $\nu$}\Big\}.$$
The  $L^p$-Wasserstein space $\PPP_p(X)$ is the set of Borel probabilistic measures with finite $p$-th moment on $X$ equipped with $L^p$-Wasserstein distance.

Recall that, for a metric space $X$ we denote by $X^n_p$ the $p$-power of $X$ and by $X^n_p/S_n$ the quotient space of $X^n_p$ by permutations of coordinates.
The following lemma allows to deduce Theorem \ref{ThmCases}(\ref{WS}) from Proposition \ref{Markovquotient}, see Proposition \ref{WassMarkov}.

\begin{lemma}\label{embed}
Let $X$ be a metric space, $n \in \Z$ and $p \ge 1$. The map ${\Phi_n:{n^{-\frac{1}{p}}}(X^n_p/S_n) \rightarrow \PPP_p(X)}$ defined by 
$$\Phi_n(x_1,\dots,x_n) = \frac{1}{n}\delta(x_1) + \dots + \frac{1}{n}\delta(x_n)$$
is a distance preserving map.
\end{lemma}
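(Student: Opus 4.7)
The plan is to expand both sides of the claimed isometry explicitly and match them. On the source $n^{-1/p}(X^n_p/S_n)$, the definitions of the $p$-product metric, quotient metric and scaling combine to give, for two classes of $(x_1,\dots,x_n)$ and $(y_1,\dots,y_n)$, the distance $\min_{\sigma\in S_n}\bigl(\tfrac{1}{n}\sum_{i=1}^n d(x_i,y_{\sigma(i)})^p\bigr)^{1/p}$. On the target, with $\mu=\Phi_n(x)=\tfrac{1}{n}\sum_i\delta(x_i)$ and $\nu=\Phi_n(y)=\tfrac{1}{n}\sum_i\delta(y_i)$, the $L^p$-Wasserstein distance is $\bigl(\inf_q\int d^p\,dq\bigr)^{1/p}$ where $q$ ranges over couplings of $\mu$ and $\nu$. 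My task is to show that these two infima agree.

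For the inequality $d_{W_p}(\mu,\nu)^p\le\min_\sigma\tfrac{1}{n}\sum_i d(x_i,y_{\sigma(i)})^p$, I introduce for each $\sigma\in S_n$ the permutation coupling $q_\sigma=\tfrac{1}{n}\sum_{i=1}^n\delta_{(x_i,y_{\sigma(i)})}$. A direct check of marginals shows that $q_\sigma$ is a coupling of $\mu$ and $\nu$, and $\int d^p\,dq_\sigma=\tfrac{1}{n}\sum_i d(x_i,y_{\sigma(i)})^p$. Taking the minimizing $\sigma$ gives the upper bound.

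For the reverse inequality, the natural tool is the Birkhoff--von Neumann theorem. Assume first that $x_1,\dots,x_n$ are pairwise distinct and $y_1,\dots,y_n$ are pairwise distinct. Any coupling $q$ is supported on the finite set $\{x_1,\dots,x_n\}\times\{y_1,\dots,y_n\}$; set $a_{ij}=n\cdot q(\{(x_i,y_j)\})$. The marginal conditions translate exactly to the $n\times n$ matrix $A=(a_{ij})$ being doubly stochastic, and the cost equals $\tfrac{1}{n}\sum_{i,j} a_{ij}d(x_i,y_j)^p$. Writing $A=\sum_k\lambda_k P_{\sigma_k}$ as a convex combination of permutation matrices expresses the cost as a convex combination of the quantities $\tfrac{1}{n}\sum_i d(x_i,y_{\sigma_k(i)})^p$, which is bounded below by $\min_\sigma\tfrac{1}{n}\sum_i d(x_i,y_{\sigma(i)})^p$, as required.

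The main technical nuisance is the case when some of the $x_i$ or $y_j$ coincide, so that $\mu$ or $\nu$ carries atoms of mass larger than $1/n$. Given a coupling $q$, I would redistribute each value $q(\{(x,y)\})$ uniformly among the index pairs $(i,j)$ with $x_i=x$ and $y_j=y$, thereby producing an $n\times n$ matrix $A$. A short bookkeeping check shows that $A$ is doubly stochastic, and the cost is preserved because $d(x_i,y_j)$ is constant over each such group of indices. Birkhoff--von Neumann then closes the argument exactly as in the distinct case.
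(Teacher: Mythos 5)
Your proof is correct and follows essentially the same route as the paper: both identify the Wasserstein distance between the two uniform empirical measures with a linear program over doubly stochastic matrices and invoke the Birkhoff--von Neumann theorem to see that the optimum is attained at a permutation matrix, matching the quotient metric on $n^{-\frac{1}{p}}(X^n_p/S_n)$. Your explicit treatment of repeated points (redistributing coupling mass over coinciding indices) fills in a detail the paper passes over silently, but it is a refinement of the same argument rather than a different one.
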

\begin{proof}
We denote ${n^{-\frac{1}{p}}}(X^n_p/S_n)$ by $Y$.
Fix two points $w = (w_1,\dots,w_n)$ and $q = (q_1,\dots,q_n)$ in $Y$. The distance between $w$ and $q$ is given by 
$$d^p_{Y}(w,q) = \frac{1}{n}\inf_{s \in S_n}\sum_{i=1}^nd^p(w_i,q_{s(i)}).$$
Let $\PP_n$  denote  the set of all $n \times n$ permutation matrices and $D$ the $n \times n$ matrix defined by $D_{ij} = d^p(w_i,q_j)$.   The formula for the distance  can be rewritten as 
$$d^p_{Y}(w,q) = \frac{1}{n}\inf_{A \in \PP_n} D \circ A,$$
where $\circ$ denotes the Hadamard product (entrywise product) of matrices. 

Let $\DD$ denotes the set of all $n \times n$ doubly stochastic matrices. Then the Wasserstein distance between $\Phi_n(w)$ and $\Phi_n(q)$ can be written as
$$d^p_{W_p}(\Phi_n(w),\Phi_n(q)) = \frac{1}{n}\inf_{A \in \DD} D \circ A.$$ 
By the Birkhoff-von Neumann theorem  $\DD$ is the convex hull of $\PP_n$. Since the  $"D \circ"$ is a linear functional it follows that    
$$ \frac{1}{n}\inf_{A \in \DD} D \circ A = \frac{1}{n}\inf_{A \in \PP_n} D \circ A.$$ 
Thus $d_Y(w,q) = d_{W_p}(\Phi(w),\Phi(q))$.
\end{proof}





\begin{proposition}\label{WassMarkov}
Let $X$ be a a metric space, $p \ge 1$ and $T \in \n$. Then ${M_p(\PPP_p(X), T) = M_p(X, T)}$ and ${M_p(\PPP_p(X)) = M_p(X)}$. 
\end{proposition}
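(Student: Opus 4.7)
The plan is to establish both inequalities $M_p(\PPP_p(X), T) \le M_p(X, T)$ and $M_p(X, T) \le M_p(\PPP_p(X), T)$; the equality for the suprema $M_p(\PPP_p(X)) = M_p(X)$ then follows by taking $\sup_T$. The Dirac map $\delta\colon X \to \PPP_p(X)$, $x \mapsto \delta_x$, is an isometric embedding, so the second (easy) inequality is immediate: any Markov walk $W = f(Z)$ on $X$ yields a Markov walk $(\delta \circ f)(Z)$ on $\PPP_p(X)$ with identical pairwise distances, and hence identical Markov type $p$ ratios.

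For the first inequality I would argue by approximation. Fix a Markov walk $W_t = f(Z_t)$ on $\PPP_p(X)$, where $Z$ is a stationary reversible Markov chain on a finite set $S_0$; only the finitely many measures $\mu_s := f(s) \in \PPP_p(X)$, $s \in S_0$, appear as values of $W$. For any $\ee > 0$ I would simultaneously approximate these by $n$-point uniform measures with a common denominator $n$: choose points $x_{s,1}, \dots, x_{s,n} \in X$ so that
$$d_{W_p}\Bigl(\mu_s,\; \tfrac{1}{n} \sum_{i=1}^n \delta_{x_{s,i}}\Bigr) < \ee \quad \text{for every } s \in S_0.$$
Such an approximation is standard: finitely supported rational-weight atomic measures are dense in $\PPP_p(X)$, and since $S_0$ is finite one can pass to a common denominator $n$. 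Set $f^\ee(s) := \tfrac{1}{n}\sum_{i=1}^n \delta_{x_{s,i}}$ and define the approximating Markov walk $W^\ee_t := f^\ee(Z_t)$ on $\PPP_p(X)$.

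By Lemma \ref{embed} the image of $f^\ee$ lies in an isometric copy of $n^{-\frac{1}{p}}(X^n_p/S_n)$ inside $\PPP_p(X)$, so $W^\ee$ can be viewed as a Markov walk on $n^{-\frac{1}{p}}(X^n_p/S_n)$. Applying Proposition \ref{remMarkovProducts} (scaling and $p$-power invariance) together with Proposition \ref{Markovquotient} (quotient by the finite group $S_n$) gives
$$M_p\bigl(n^{-\tfrac{1}{p}}(X^n_p/S_n),\; T\bigr) = M_p(X^n_p/S_n, T) \le M_p(X^n_p, T) = M_p(X, T),$$
so that $\EE_p(W^\ee, T) \le M_p(X,T)^p \, T \, \EE_p(W^\ee, 1)$.

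Since $d_{W_p}(W^\ee_t, W_t) \le \ee$ almost surely for every $t$ and both walks are finite-state hence uniformly bounded, dominated convergence gives $\EE_p(W^\ee, T) \to \EE_p(W, T)$ and $\EE_p(W^\ee, 1) \to \EE_p(W, 1)$ as $\ee \to 0$. Passing to the limit yields $\EE_p(W, T) \le M_p(X, T)^p T \, \EE_p(W, 1)$, and therefore $M_p(\PPP_p(X), T) \le M_p(X, T)$. The main obstacle is this approximation-and-limit step: the approximation must be performed for all $\mu_s$ simultaneously with a single $n$ (so that Lemma \ref{embed} places all of $W^\ee$ inside one rescaled quotient), using the \emph{same} chain $Z_t$ (so that $W^\ee$ remains a Markov walk), while the error in $\EE_p$ is controlled uniformly in $\ee$.
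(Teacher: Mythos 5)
Your proof is correct and follows essentially the same route as the paper: approximate by uniform atomic measures, embed them via Lemma \ref{embed} into an isometric copy of $n^{-\frac{1}{p}}(X^n_p/S_n)$, bound its Markov type constant by $M_p(X,T)$ using Propositions \ref{remMarkovProducts} and \ref{Markovquotient}, and get the reverse inequality from the Dirac embedding. The only difference is presentational: the paper compresses your explicit $\ee$-approximation and limiting step into the assertion that $\cup_k I_k$ is dense in $\PPP_p(X)$, hence $M_p(\PPP_p(X),T)=\sup_k M_p(I_k,T)$, which is exactly the argument you spell out.
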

\begin{proof}
For $k \in \n$ we denote by $I_k$ the image of $\Phi_{2^k}$,  where $\Phi_{2^k}$ is the map defined in Lemma \ref{embed}. Note that $I_k \subset I_{k+1}$ for every $k \in \n$.
Since the union $\cup_{k = 1}^{\infty} I_k$ is dense in $\PPP_p(X)$ (see \cite{vil}) we have 
$$M_p(\PPP_p(X), T) = \sup_{k \in \n}M_p(I_k, T).$$
From Lemma \ref{embed}, Proposition \ref{Markovquotient} and Proposition \ref{remMarkovProducts} it follows that 
$$M_p(I_k, T) = M_p({{(2^k)}^{-\frac{1}{p}}}(X^{(2/^k)}_p/S_{2^k}), T) \le M_p(X, T).$$
Hence we have $M_p(\PPP_p(X), T) \le M_p(X, T)$. The existence of isometric copy of $X$ in $\PPP_p(X)$ implies the opposite inequality.
\end{proof}

\section{Proofs of Corollaries \ref{WEucl} and \ref{Distortion} and counter examples.}\label{L3W}
\label{Counter}
\begin{proof}[Proof of Corollary \ref{WEucl}(\ref{WpEucl})]
For $p > 2$ and $T \in \n$ we have the following upper bound for $M_p(\R,T)$, 
$$M_p(\R,T) \le 16p^{\frac{1}{2}}T^{\frac{1}{2} - \frac{1}{p}},$$ 
see \cite [Theorem 4.5]{NPSS}.
Proposition \ref{remMarkovProducts} implies that $$M_p(\R^d_p,T) \le 16p^{\frac{1}{2}}T^{\frac{1}{2} - \frac{1}{p}}.$$ 
Since the $l_p$ norm on $\R^d$ is $d^{\frac{1}{2} - \frac{1}{p}}$-equivalent to the $l_2$ norm on $\R^d$, we obtain  $${M_p(\R^d,T) \le 16d^{\frac{1}{2} - \frac{1}{p}}p^{\frac{1}{2}}T^{\frac{1}{2} - \frac{1}{p}}}.$$ 
Finally Proposition \ref{WassMarkov} provides an upper bound  for $M_p(\PPP_p(\R^d),T)$,
   $${M_p(\PPP_p(\R^d),T) \le 16d^{\frac{1}{2} - \frac{1}{p}}p^{\frac{1}{2}}T^{\frac{1}{2} - \frac{1}{p}}}\text.$$
\end{proof}

\begin{definition}[see \cite{ANN}]\label{defdist}
Let $X$ and $Y$ be metric spaces and ${D \in [1 , \infty]}$.  A mapping $f : X \rightarrow Y$ is said to have distortion at most $D$, if there exists $s \in (0, \infty )$ such that every $x,y \in X$ satisfy $${sd_X(x, y) \le d_Y(f(x), f(y)) \le Dsd_X(x, y)}.$$  The infimum over those $D \in [1, \infty]$ for which this holds true is called the \textit{distortion} of $f$ and is denoted by $dist(f)$.  The infimum of $dist(f)$ over all $f:X \rightarrow Y$ is denoted by $c_{Y}(X,d_X)$.
\end{definition}

We remind the reader that for a metric space $(X, d_X)$ and $\aa \in (0, 1]$, the metric space $(X,d^\aa_X)$ is called  the $\aa$-snowflake of $(X,d_X)$. 

\begin{proof}[Proof of Corollary \ref{Distortion}]

The following lemma provides a restriction on bi-Lipschitz embeddability of snowflakes into spaces with bounded Markov type constants. 

\begin{lemma}[\cite{ANN}, Lemma 16] \label{Hamm}
Fix a metric space $Y$, $T \in \n$, $K, p \in [1, \infty)$ and $\zeta \in [0,1]$. Suppose that 
$$M_p(Y,T) \le KT^{\frac{\zeta(p-1)}{p}}.$$
Denote $n = 2^{4T}$. Then trere exists an $n$-point metric space $(X,d_X)$ such that 
$$ c_Y(X,d^\alpha_X) \ge C \frac{1}{K}(\log n)^{\alpha - \frac{1+\zeta(p-1)}{p}}\text{, for every }\alpha \in \Big [\frac{1+\zeta(p-1)}{p},1 \Big ],$$
where $C > 0$ is an absolute constant.
\end{lemma}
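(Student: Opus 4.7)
The plan is to exhibit $X$ as the Hamming cube $\{0,1\}^k$ with $k=4T$, equipped with the Hamming metric $d_X$, so that $|X|=2^{4T}=n$. Let $W=\{W_t\}_{t=0}^\infty$ be the standard simple random walk on $X$ started from the uniform stationary distribution, i.e.\ at each step $W$ flips a uniformly chosen coordinate. This is a stationary reversible Markov chain, hence a Markov walk in the sense of Definition \ref{DefMarkovWalk}. For any map $f\colon(X,d_X^\alpha)\to Y$ the composition $f\circ W$ is a Markov walk on $Y$, so the hypothesis on $M_p(Y,T)$ will convert the spreading behavior of $W$ on the cube into a lower bound on the distortion of $f$ in the sense of Definition \ref{defdist}.

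Two moment estimates for $W$ form the computational core. Because each step flips exactly one coordinate,
$$\E\bigl[d_X^\alpha(W_1,W_0)^p\bigr]=\E\bigl[d_X(W_1,W_0)^{\alpha p}\bigr]=1.$$
For the $T$-step displacement, let $B_i$ be the indicator that coordinate $i$ is flipped an odd number of times during the first $T$ steps, so that $d_X(W_T,W_0)=\sum_{i=1}^k B_i$. A direct computation gives $\E[B_i]=\tfrac12\bigl(1-(1-2/k)^T\bigr)$, which for $k=4T$ is bounded below by an absolute positive constant; thus $\E[d_X(W_T,W_0)]\gtrsim T$. A variance estimate exploiting the near-independence of the $B_i$'s (they are functions of a multinomial vector, hence negatively associated) yields $\mathrm{Var}\bigl(\sum_i B_i\bigr)=O(T)$, so $\sum_i B_i$ concentrates on a set of size $\Theta(T)$. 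Since the admissible range of $\alpha$ forces $\alpha p\ge 1+\zeta(p-1)\ge 1$, Jensen together with this concentration gives
$$\E\bigl[d_X(W_T,W_0)^{\alpha p}\bigr]\gtrsim T^{\alpha p}.$$

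To conclude, suppose $f\colon(X,d_X^\alpha)\to Y$ has distortion $D$ with scaling constant $s>0$, so $s\,d_X^\alpha(x,y)\le d_Y(f(x),f(y))\le Ds\,d_X^\alpha(x,y)$ for all $x,y\in X$. Applying the Markov type bound $M_p(Y,T)\le KT^{\zeta(p-1)/p}$ from Definition \ref{MTT} to the Markov walk $f\circ W$ gives
$$\E\bigl[d_Y(f(W_T),f(W_0))^p\bigr]\le K^p\,T^{\,1+\zeta(p-1)}\,\E\bigl[d_Y(f(W_1),f(W_0))^p\bigr].$$
Bounding the left-hand side below by $s^p\,\E[d_X(W_T,W_0)^{\alpha p}]\gtrsim s^p T^{\alpha p}$ and the right-hand side above by $K^p T^{1+\zeta(p-1)}(Ds)^p$, I obtain
$$D\gtrsim \frac{1}{K}\,T^{\alpha-\frac{1+\zeta(p-1)}{p}},$$
and substituting $T=\tfrac14\log_2 n$ produces the claimed lower bound on $c_Y(X,d_X^\alpha)$.

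The hard part is the anti-concentration estimate $\E[d_X(W_T,W_0)^{\alpha p}]\gtrsim T^{\alpha p}$, since one needs a high moment controlled uniformly in $\alpha$ and $p$ rather than just the mean. A Chernoff/Azuma tail bound for $\sum_i B_i$ using the negative association of the $B_i$'s should suffice; alternatively one can Poissonize the number of steps so that the coordinate flip counts become exactly independent, and then transfer the estimate back by a standard de-Poissonization. Every other step is a routine consequence of the Markov type definition and the distortion inequality.
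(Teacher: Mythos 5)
Your proposal is correct and follows the same route as the source the paper cites: the paper itself gives no proof of this lemma, only the reference to \cite{ANN} together with the remark that the space there is the $4T$-dimensional Hamming cube, and your reconstruction (stationary coordinate-flipping walk on $\{0,1\}^{4T}$, one-step moment $=1$, $T$-step displacement $\gtrsim T$ in expectation via the parity indicators, then the Markov type inequality applied to $f\circ W$) is exactly that argument. The one point worth correcting is your closing worry: no anti-concentration, negative association, or Poissonization is needed. Since $\alpha p\ge 1+\zeta(p-1)\ge 1$, Jensen alone gives $\E\bigl[d_X(W_T,W_0)^{\alpha p}\bigr]\ge\bigl(\E\, d_X(W_T,W_0)\bigr)^{\alpha p}\ge (cT)^{\alpha p}$ with $c=\tfrac12\bigl(1-e^{-1/2}\bigr)\cdot$(an absolute factor), and although $c^{\alpha p}$ degrades with $p$, you take a $p$-th root at the end: the final bound is $D\ge \frac{c^{\alpha}}{K}T^{\alpha-\frac{1+\zeta(p-1)}{p}}\ge \frac{c}{K}T^{\alpha-\frac{1+\zeta(p-1)}{p}}$ because $\alpha\le 1$ and $c<1$, so the constant is absolute. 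The step you flag as ``the hard part'' is therefore already closed by the Jensen application you wrote down.
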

Lemma \ref{Hamm} as stated does not  claim that $(X,d_X)$ do not depend on $p$, but the proof is given for $4T$-dimensional discrete Hamming cube, i.e., ${(X,d_X) = (\{0,1\}^{4T},||\cdot||_1)}$.
Applying  Lemma \ref{Hamm} to $Y = \PPP_p(\R^d)$, $\zeta = \frac{\frac{p}{2} - 1}{p - 1}$ and $K = 16d^{\frac{1}{2} - \frac{1}{p}}p^{\frac{1}{2}}$ we obtain Corollary \ref{Distortion}. 
\end{proof}

\begin{proof}[Proof of Corollary \ref{WEucl}(\ref{W2Eucl})]
The proof is based on the following proposition.
\begin{proposition}[\cite{NPSS}, Theorem 1.2]\label{Magic}
For $p \ge 2$ we have $M_2(L_p) \le 4\sqrt{p - 1}$.
\end{proposition}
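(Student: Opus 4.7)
The statement is the classical Naor--Peres--Schramm--Sheffield bound, so my ``proposal'' is essentially to reconstruct the argument I know from their paper. The plan decomposes into one geometric input about $L_p$ and one Markov--chain argument that turns that geometric input into a Markov type estimate.

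\textbf{Geometric input (2-uniform smoothness of $L_p$).} The first step is to record that for $p \ge 2$ and all $x,y \in L_p$,
\begin{equation*}
\|x+y\|_p^2 + \|x-y\|_p^2 \le 2\|x\|_p^2 + 2(p-1)\|y\|_p^2,
\end{equation*}
so that $L_p$ is $2$-uniformly smooth with smoothness constant $\sqrt{p-1}$, and this constant is sharp. This follows from Hanner's inequality (or from a direct manipulation of Clarkson's inequalities); it is classical and purely analytic, so I would quote it as a black box.

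\textbf{From smoothness to Markov type.} The heart of the proof is the implication: if a Banach space $X$ satisfies the $2$-smoothness inequality above with constant $S$, then $M_2(X) \le 4S$. Fix a stationary reversible Markov chain $\{Z_t\}_{t=0}^\infty$ on a finite state space $S$ with stationary measure $\pi$ and self-adjoint transition operator $A$ on $L^2(S,\pi)$, and fix $f:S\to X$. The strategy is to consider the Doob martingale $N_t := \E[f(Z_T)-f(Z_0)\mid \mathcal{F}_t]$ relative to $\mathcal{F}_t=\sigma(Z_0,\dots,Z_t)$, and apply Pisier's martingale inequality for $2$-uniformly smooth spaces, which gives
\begin{equation*}
\E\|N_T - N_0\|^2 \le S^2\sum_{t=1}^T \E\|N_t - N_{t-1}\|^2.
\end{equation*}
This reduces everything to controlling the total quadratic variation of $N_t$ by $T \cdot \E\|f(Z_1)-f(Z_0)\|^2$.

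\textbf{Where reversibility enters.} The main obstacle is exactly this step: a priori the martingale increment $\E[f(Z_T) \mid \mathcal{F}_t] - \E[f(Z_T)\mid\mathcal{F}_{t-1}]$ could be much bigger than a single chain step, because it looks many steps ahead. Here reversibility is crucial. Self-adjointness of $A$ on $L^2(\pi)$ lets one diagonalize the propagator and express the martingale differences as spectral contributions $(A^{T-t} - A^{T-t+1})g$ of certain vector-valued functions $g$; summing the squared norms telescopes against the Dirichlet form $\langle (I-A)g,g\rangle_\pi$, which is exactly $\tfrac12 \E\|f(Z_1)-f(Z_0)\|^2$ when $g=f$. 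A careful accounting shows
\begin{equation*}
\sum_{t=1}^T \E\|N_t - N_{t-1}\|^2 \le C\, T\, \E\|f(Z_1)-f(Z_0)\|^2
\end{equation*}
with a universal constant that, combined with $S = \sqrt{p-1}$ and the factor from Pisier's inequality, yields the clean bound $16(p-1)$ inside the square, hence $M_2(L_p) \le 4\sqrt{p-1}$.

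In short: quote Hanner to get $2$-smoothness of $L_p$ with constant $\sqrt{p-1}$; pass to the Doob martingale and apply Pisier's inequality; then exploit reversibility via spectral theory of the self-adjoint transition operator to bound the aggregate martingale variation linearly in $T$. The last step is the genuinely nontrivial part and is where I would expect to spend the most effort.
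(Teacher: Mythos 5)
First, a point of comparison: the paper does not prove this proposition at all --- it is imported verbatim as Theorem 1.2 of Naor--Peres--Schramm--Sheffield and used as a black box, so the only thing to measure your argument against is the cited source. Your geometric input (the two-point inequality giving $2$-uniform smoothness of $L_p$ with constant $\sqrt{p-1}$, sharp) and your closing summary of the overall strategy are accurate descriptions of what happens there.

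The middle step, however, has a genuine gap as written. You propose to control the Doob martingale $N_t=\mathbb{E}[f(Z_T)-f(Z_0)\mid\mathcal{F}_t]$, whose increments are $(A^{T-t}f)(Z_t)-(A^{T-t+1}f)(Z_{t-1})$, and to bound $\sum_t\mathbb{E}\|N_t-N_{t-1}\|^2$ by diagonalizing the propagator and telescoping against the Dirichlet form. That is Ball's original argument for Hilbert space, and it relies on $\|\cdot\|^2$ being a quadratic form: only then does the spectral resolution of the self-adjoint operator $A$ turn quantities such as $\mathbb{E}\|(A^k f)(Z_1)-(A^k f)(Z_0)\|^2$ into sums over eigenvalues that can be compared termwise with $\mathbb{E}\|f(Z_1)-f(Z_0)\|^2$. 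In $L_p$ with $p>2$ the squared norm is not an inner product, the Dirichlet form is not diagonalized by the spectral decomposition of $A$, and this step breaks down. The NPSS proof avoids it entirely: instead of the Doob martingale of $f(Z_T)$, one decomposes $f(Z_T)-f(Z_0)$ into a forward martingale with increments $f(Z_t)-(Af)(Z_{t-1})$ and a backward martingale for the time-reversed chain (which, by reversibility, is again a stationary chain with the same kernel), plus boundary terms; each martingale's quadratic variation is bounded by $T\,\mathbb{E}\|f(Z_1)-f(Z_0)\|^2$ using only Jensen's inequality, and Pisier's martingale inequality is applied to both. Reversibility enters only to make the reversed process Markov, not through spectral calculus. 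If you replace your spectral step by this forward--backward decomposition, the rest of your outline goes through and yields the stated bound $M_2(L_p)\le 4\sqrt{p-1}$.
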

Euclidean space $\R^d$ is $d^{\frac{1}{2} - \frac{1}{p}}$-equivalent to $\R^d_p$. Hence, $\PPP_p(\R^d)$ is  
$d^{\frac{1}{2} - \frac{1}{p}}$-equivalent to $\PPP_p{(\R^d_p)}$.
Thus, $M_2(\PPP_p(\R^d)) \le d^{\frac{1}{2} - \frac{1}{p}}M_2(\PPP_p{(\R^d_p)})$.

The remaining part of the proof is similar to the proof of Proposition \ref{WassMarkov}.
Proposition \ref{embed} provides us isometries ${\Phi_n:{n^{-\frac{1}{p}}}((\R^d_p)^n_p/S_n) \rightarrow \PPP_p(\R^d_p)}$. We denote by $I_k$ the image of $\Phi_{2^k}$.
We have  $I_k \subset I_{k+1}$ for every $k \in \N$. Since the union $\cup_{k=1}^{\infty}I_k$ is dense in $\PPP_p(\R^d_p)$ it follows that 
$$M_2(\PPP_p(\R^d_p)) = \sup_{k \in \N}M_2(I_k) =  \sup_{k \in \N}M_2((\R^d_p)^k_p/S_k).$$

By Proposition \ref{Markovquotient} we have $M_2((\R^d_p)^k_p/S_k) \le M_2((\R^d_p)^k_p) = M_2(\R^{dk}_p)$. Proposition \ref{Magic} implies that $M_2(\R^{dk}_p) \le 4\sqrt{p - 1}$. Hence, $M_2(\PPP_p(\R^d_p)) \le 4\sqrt{p - 1}$.
\end{proof}

The following example shows that Theorem \ref{ThmCases}(\ref{FSC},\ref{FGC}) does not hold in general for infinitely sheeted coverings and infinite groups of isometries. 

\begin{example}\label{FiniteIsNes}
Consider the $d$-dimensional Hamming cube $\Omega^d$, i.e a set $\{0,1\}^d$ with the $L_1$ metric. For the Markov type constants $M_2(\Omega^d)$ we have
 $$M_2(\Omega^d) \xrightarrow[d \to \infty]{} \infty,$$ see \cite[Section 9.4]{IRP}.

The Hamming cube $\Omega^d$ can be converted to a metric graph $G(\Omega^d)$ by adding edges of length $1$ between every two points $x,y \in \Omega^d: d(x,y) = 1$. Consider
 the universal cover $\widetilde{G}(\Omega^d)$ of $G(\Omega^d)$.  The graph $\widetilde{G}(\Omega^d)$ is a metric tree and consequently $M_2(\widetilde{G}(\Omega^d)) \le 30$, see \cite{NPSS}. 

Thus for a large enough $d$ we have $M_2(\widetilde{G}(\Omega^d)) \le 30 < M_2(G(\Omega^d))$. 
\end{example}

\begin{definition}\label{subm}
Let $X$ and $Y$ be metric spaces. A map $\chi:X \rightarrow Y$ is a \textit{submetry}, iff for every $x \in X$ and every $r > 0$ $$\chi(B(x,r))  = B(\chi(x),r),$$
where $B(x,r)$ denotes the closed ball with center $x$ and radius $r$.  
\end{definition}

The following conjecture suggests a uniform approach to Propositions  \ref{MarkovCovering} and \ref{Markovquotient}.

\begin{conjecture}\label{Conj}
Let $X$ and $Y$ be metric spaces such that there exists a submetry $\chi:X \rightarrow Y$, such that for every $y \in Y$ the set $\chi^{-1}(y)$ is finite. Then $ M_2(X) \ge M_2(Y) $. 
\end{conjecture}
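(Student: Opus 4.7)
The plan is to mimic the lifting strategy of Lemmas \ref{GenLiftLemma} and \ref{quotientLiftLemma}. Given a Markov walk $W_t = f(Z_t)$ on $Y$ generated by a stationary reversible chain $Z$ on a finite state space $S$, form the pullback $\wt S = \{(s,x) \in S \times X : \chi(x) = f(s)\}$, which is finite by hypothesis, with its natural projections $\ss \colon \wt S \to S$ and $\wt f \colon \wt S \to X$. Declare $E \subset \wt S \times \wt S$ to be the set of distance-realizing pairs, i.e.\ those $(\wt x_1, \wt x_2)$ with $d_X(\wt f(\wt x_1), \wt f(\wt x_2)) = d_Y(f(\ss(\wt x_1)), f(\ss(\wt x_2)))$. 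The submetry property immediately yields $\deg_E(\wt x, \ss^{-1}(s)) \ge 1$ for every $\wt x \in \wt S$ and $s \in S$: setting $r = d_Y(f(\ss(\wt x)), f(s))$, one has $f(s) \in B(\chi(\wt f(\wt x)), r) = \chi(B(\wt f(\wt x), r))$, so there is a point in $\ss^{-1}(s)$ realizing this distance exactly (since $\chi$ is $1$-Lipschitz). Any stationary reversible lifted chain $\wt Z$ on $\wt S$ that is a metric lift of $Z$ and is restricted by $E$ would, via Proposition \ref{PropOfLift}, give $M_2(X,T) \ge M_2(Y,T)$ for every $T$ and hence $M_2(X) \ge M_2(Y)$.

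The main obstacle is that, unlike in the covering and quotient settings, the map $\ss$ need not be regular with respect to $E$: submetries provide existence of distance-realizing lifts but no symmetry between points of the same fiber. Consequently Lemma \ref{LiftMarkovChain} does not apply verbatim. I would therefore relax the construction there by replacing the uniform fiber weights with a general stationary distribution $\wt \pi$ on $\wt S$ satisfying only $\wt \pi(\ss^{-1}(s)) = \pi_s$, and then search for transition probabilities $\wt a_{\wt x_1 \wt x_2}$ supported on $E$ with
\begin{enumerate}
\item $\sum_{\wt x_2 \in \ss^{-1}(s_2)} \wt a_{\wt x_1 \wt x_2} = a_{\ss(\wt x_1),\, s_2}$ for every $\wt x_1 \in \wt S$ and $s_2 \in S$,
\item $\wt \pi_{\wt x_1} \wt a_{\wt x_1 \wt x_2} = \wt \pi_{\wt x_2} \wt a_{\wt x_2 \wt x_1}$.
\end{enumerate}
For each ordered pair $(s_1,s_2)$ with $a_{s_1 s_2} > 0$ this becomes a transportation problem on the bipartite graph obtained by restricting $E$ to $\ss^{-1}(s_1) \times \ss^{-1}(s_2)$, with prescribed row- and column-marginals coming from $\wt \pi$; by Hall- and Birkhoff--von Neumann-type results such a plan exists precisely when an $\wt \pi$-weighted expansion inequality holds for every subset of each fiber, simultaneously for all pairs $(s_1,s_2)$.

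The hardest step, where the actual content of the conjecture lies, is to choose the weights $\wt \pi$ and then verify this bipartite expansion directly from the submetry hypothesis; nothing in a purely metric submetry a priori enforces a numerical balance between fibers. A natural candidate for $\wt \pi_{\wt x}$ is a normalized local volume coming from a measure preserved by $\chi$, for instance $\lim_{r \to 0} \mathcal{H}^k(\chi^{-1}(B(f(s),r)) \cap U_{\wt x}) / \mathcal{H}^k(B(f(s),r))$ for suitable neighborhoods $U_{\wt x}$ of $\wt f(\wt x)$ inside its fiber; in the Riemannian-submersion case this recovers the correct O'Neill-type equalization. If a globally consistent choice of $\wt \pi$ with the Hall condition cannot be obtained in general, a fallback strategy is to approximate the given chain $Z$ by "regularised" chains supported on partitions of $Y$ fine enough that the distance-realizing bipartite graphs become complete, as in the rescaling step of Theorem \ref{Flatis21}; this should at least establish Conjecture \ref{Conj} under added hypotheses (e.g.\ $X$ and $Y$ Alexandrov, or $\chi$ a metric analogue of a Riemannian submersion) and provide supporting evidence for the general case.
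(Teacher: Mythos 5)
The statement you are trying to prove is stated in the paper as Conjecture \ref{Conj}, and the paper explicitly leaves it open: no proof and no counterexample is given. So there is no paper proof to match, and your proposal does not close the gap either --- it is an outline whose decisive step you yourself flag as unresolved.

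More importantly, the specific route you propose is exactly the one the paper rules out. You reduce the conjecture to producing, for every Markov walk on $Y$, a stationary reversible chain on the pullback $\wt S$ supported on the set $E$ of distance-realizing pairs and projecting to $Z$; such a chain is precisely a metric lift in the sense of Definition \ref{DefLiftMarkovWalk}. Proposition \ref{CantLift} and Example \ref{ExCantLift} exhibit a submetry $\chi:\wt X \rightarrow X$ between finite metric spaces with finite fibers and a Markov walk on $X$ (with injective $f$) admitting \emph{no} metric lift along $\chi$. The extra freedom you hope to exploit --- choosing a general fiber distribution $\wt\pi$ and solving a transportation/Hall problem on each bipartite graph $E \cap (\ss^{-1}(s_1)\times\ss^{-1}(s_2))$ --- does not help, because your marginal condition (1) is not a design choice: by Lemma \ref{SimpleChain} every lift must satisfy $P^{\wt Z}(\wt s_1,\ss^{-1}(s_2)) = P^{Z}(\ss(\wt s_1),\{s_2\})$, and Lemma \ref{MassLemma} then forces fiber-mass identities which in Example \ref{ExCantLift} read $q_3=q_1=q_5\neq 0$ together with $q_3=q_1+q_5$, a contradiction for \emph{any} choice of $\wt\pi$ and transition weights. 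So the obstruction is intrinsic to the lifting method, not to your particular regularity assumption; a proof of the conjecture (if it is true) must either abandon metric lifts, allow lifts that only dominate rather than preserve the step distribution, or otherwise compare Markov type constants without lifting. Your fallback of adding hypotheses (Alexandrov spaces, Riemannian-submersion-like $\chi$, or refining the chain until the bipartite graphs are complete) could at best yield special cases, not the statement as posed.
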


We did not found a proof, or a counter example to this conjecture. But we have an example which shows that our method, i.e., lifting of a Markov walks does not work, see Proposition \ref{CantLift} and Example \ref{ExCantLift}.  

\begin{proposition}\label{CantLift}
There exist finite metric spaces $\widetilde{X}$, $X$, a submetry ${\chi:\wt{X} \rightarrow X}$, a stationary reversible Markov chain $\{Z_t\}_{t = 0}^{\infty}$ on a finite set $S$, and an injective map $f:S \rightarrow X$ such that $f(Z_t)$ does not admit a metric lift along $\chi$.
\end{proposition}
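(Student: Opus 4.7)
The statement asserts existence, so the proof is by explicit construction; the concrete example will be built in Example~\ref{ExCantLift}. What follows is the strategy I would use and the technical content it must overcome.

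My starting observation is a rigidity coming from the submetry hypothesis. Because $\chi$ is a submetry it is in particular $1$-Lipschitz, hence $d(\widetilde W_0,\widetilde W_1)\ge d(W_0,W_1)$ pointwise. The metric-lift condition $d(\widetilde W_0,\widetilde W_1)=_{st} d(W_0,W_1)$ together with this pointwise inequality forces $d(\widetilde W_0,\widetilde W_1)=d(W_0,W_1)$ almost surely. So any metric lift is supported on the \emph{isometric-edge set}
\[
E = \{(\tilde x,\tilde y)\in\widetilde X\times\widetilde X : d(\tilde x,\tilde y)=d(\chi\tilde x,\chi\tilde y)\}.
\]
By contrast with coverings and finite-isometry quotients, for a general submetry the fibers admit no transitive action by isometries of $\widetilde X$, so there is no symmetrisation trick of the kind used in Lemmas \ref{GenLiftLemma} and \ref{quotientLiftLemma}. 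My plan is to exploit this absence of symmetry by choosing a $\chi$ whose fibers have \emph{different} cardinalities and whose isometric-edge set is combinatorially asymmetric.

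Concretely, I would take $X$ to be a small finite metric space (two or three points suffice) with a stationary reversible Markov chain $Z$ on $S=X$ (so that $f=\mathrm{id}$ is injective and $W=Z$ is itself a Markov chain) whose transition probabilities are generic, and I would build $\widetilde X$ with an asymmetric fiber structure so that, for each $y\in X$ and each preimage $\tilde x$ of $W_0$, only a specific restricted set of preimages of $y$ is reachable via $E$. For any candidate Markov-walk lift $\widetilde W=g(\widetilde Z_t)$, the joint law of $(\widetilde W_0,\widetilde W_1)$ is then constrained simultaneously by: (a) support in $E$, (b) projection equivalence $\chi(\widetilde W)\stackrel{\mathrm{law}}{=}W$ on two-step cylinders, and (c) time-reversibility of $\widetilde W$, which is forced by the reversibility of the hidden chain $\widetilde Z$. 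I would then use a multi-step projection identity (for example, equating $P(W_0=x,W_1=y,W_2=x)$ and $P(W_0=x,W_1=y,W_2=x,W_3=y,W_4=x)$ to their lifted counterparts) together with the equality case of Cauchy--Schwarz to pin down the per-preimage transition probabilities to specific common values, and then arrange the distances in $\widetilde X$ and the chain on $X$ so that the pinned values force some detailed balance or stationarity equation to have no solution in $[0,1]$.

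The main obstacle is precisely this last step: one must rule out all possible hidden state spaces $\widetilde S$ and maps $g:\widetilde S\to\widetilde X$, not only Markov chains directly on $\widetilde X$. The resolution is the key point: items (a)--(c) in the previous paragraph involve only the one-, two-, and three-dimensional marginals of $\widetilde W$, and these marginals are functionals of the hidden chain's projection alone. Thus the constraints derived from (a)--(c) hold for every Markov-walk lift, independently of how large $\widetilde S$ is taken. Once the constraints are shown to be infeasible for a carefully chosen small example, no metric lift of $W$ along $\chi$ can exist, which proves the proposition.
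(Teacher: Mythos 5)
Your high-level strategy is essentially the paper's: the rigidity observation (a submetry is $1$-Lipschitz, so the distributional identity $d(\widetilde W_0,\widetilde W_1)=_{st}d(W_0,W_1)$ forces almost-sure distance preservation along each transition, i.e.\ support on the isometric-edge set), the use of the equality case of Cauchy--Schwarz on repeated-projection identities to pin down per-fiber transition probabilities (this is exactly Lemma \ref{SimpleChain}), and the derivation of mass-balance equations on fibers that an asymmetric example should violate (Lemma \ref{MassLemma}). However, as written your text is a plan, not a proof. The proposition is a pure existence statement, so its entire content is an explicit construction together with a verified contradiction; you defer precisely this part ("I would take\dots", "I would build\dots", "arrange the distances \dots so that the pinned values force some detailed balance or stationarity equation to have no solution") without exhibiting any spaces, any submetry, any chain, or any infeasible system of equations. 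Nothing in the proposal demonstrates that such an arrangement exists, and that is the whole difficulty the paper's Example \ref{ExCantLift} is designed to overcome: a $4$-point base, a $12$-point cover (a $12$-cycle with four chords), the reduction via injectivity of $f$ to a chain lift along $\sigma=f^{-1}\circ\chi\circ\widetilde f$, and the explicit contradiction $q_3=q_1=q_5\neq 0$ versus $q_3=q_1+q_5$ obtained from Lemma \ref{MassLemma}.

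Moreover, your parenthetical claim that "two or three points suffice" for $X$ is wrong in the two-point case and unverified in the three-point case. For a two-point base $\{a,b\}$ at distance $r$, the submetry axioms imply that every point of one fiber has a point of the other fiber at distance exactly $r$; weighting each isometric edge of the resulting bipartite graph equally, taking $\widetilde\pi_u$ proportional to $\pi_{\chi(u)}\deg(u)$, sending mass $a_{ab}$ uniformly to the neighbors of $u$ and keeping the stay-probability as a loop, one checks detailed balance and the hypotheses of Lemma \ref{LiftFromLoc}, so \emph{every} Markov walk on a two-point space admits a metric lift along \emph{every} submetry with finite fibers. So the asymmetry you want cannot be produced that cheaply; the obstruction in the paper comes from a cycle structure upstairs forcing equal fiber masses around the loop while a branch point (the vertex $\widetilde x_3$ adjacent to two preimages of the same base point) forces an additive relation among those same masses. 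Without producing and verifying some such configuration, the proof is incomplete.
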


The proof of Proposition \ref{CantLift} occupies the rest of this section. The construction is given in the following example.

\begin{example}\label{ExCantLift}
Let $X = \{x_1,x_2,x_3, x_4\}$, $G_{X}$ a graph with vertex set $X$ and $5$ edges connecting all pairs of vertices except $x_2$ and $x_4$. We consider $X$ as a metric space with metric induced from $G_{X}$, i.e distance between every pair of points except $\{x_2,x_4\}$ equals $1$. And distance between $x_2$ and $x_4$ equals $2$.

Let $\{Z_t\}_{t = 0}^{\infty}$ be a markov chain on the set $S = X = \{x_1,\dots,x_4\}$ with stationary distribution $(\frac{3}{10},\frac{2}{10},\frac{3}{10},\frac{2}{10})$ and transition matrix 
$A =
\begin{bmatrix}
    0  &  \frac{1}{3} & \frac{1}{3} & \frac{1}{3}  \\
    \frac{1}{2}  &  0 & \frac{1}{2} & 0  \\
    \frac{1}{3}  &  \frac{1}{3} & 0 & \frac{1}{3}  \\
    \frac{1}{2}  &  0 & \frac{1}{2} & 0 
\end{bmatrix},
$
and let $f = id$.

Let $\wt{X} = \{\wt{x_1},\dots,\wt{x}_{12}\}$, $G_{\wt{X}}$ a graph with vertex set $X$ and $16$ edges. The first group of edges forms the loop $\wt{x_1},\dots,\wt{x}_{12}$. The second group contains remaining $4$ edges connecting following pairs of vertices $\{\wt{x_3},\wt{x}_1\}$, $\{\wt{x_3},\wt{x}_5\}$, $\{\wt{x_9},\wt{x}_7\}$, $\{\wt{x_9},\wt{x}_{11}\}$.
Again we consider $\wt{X}$ as a metric space with metric induced from  $G_{\wt{X}}$.   

Let $r_4:Z_{+} \rightarrow \{1,2,3,4\}$ be the reminder of a number modulo $4$. Let $\chi:\wt{X} \rightarrow X$ be a map defined by 
$$\chi(\wt{x}_i) = x_{r_4(i)}.$$
Note that $\chi$ is a locally surjective graph homomorphism between $G_{\wt{X}}$ and $G_{X}$. Hence, $\chi$ is a submetry.

\end{example}

\begin{lemma}\label{SimpleChain}
Let $\{\widetilde{Z_t}\}_{t = 0}^{\infty}$ and $\{Z_t\}_{t = 0}^{\infty}$ be stationary reversible Markov chains on finite sets $\widetilde{S}$ and $S$. Suppose that $\wt{Z}$ is a lift of $Z$ along a map ${{\ss:\wt{S} \rightarrow S}}$. Then we have 
$ P^{\wt Z}(\wt{s}_1, \ss^{-1}(s_2)) = P^{Z}(\ss(\wt{s}_1),\{s_2\}),\label{ChainLocTriv}$
for every $\wt{s_1} = \wt{S}$ and every $s_2 \in S$.
\end{lemma}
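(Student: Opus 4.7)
The plan is to show that $p_{\wt s} := P^{\wt Z}(\wt s, \ss^{-1}(s_2))$ depends on $\wt s$ only through $\ss(\wt s)$, by a variance argument with weights $\wt\pi_{\wt s}$. Fix $\wt s_1 \in \wt S$ and $s_2 \in S$, set $s_0 := \ss(\wt s_1)$, and let $a$ denote the transition matrix of $Z$. The target identity $p_{\wt s_1} = a_{s_0 s_2}$ will follow once I establish
$$\sum_{\wt s \in \ss^{-1}(s_0)} \wt\pi_{\wt s} (p_{\wt s} - a_{s_0 s_2})^2 = 0,$$
so the heart of the argument is to pin down the zeroth, first, and second moments of $(p_{\wt s})_{\wt s \in \ss^{-1}(s_0)}$ weighted by $\wt\pi$.

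The zeroth and first moments come directly from the lift hypothesis: at $T = 0$ it gives $\sum_{\wt s \in \ss^{-1}(s_0)} \wt\pi_{\wt s} = \pi_{s_0}$, and at $T = 1$ it gives $\sum_{\wt s \in \ss^{-1}(s_0)} \wt\pi_{\wt s} p_{\wt s} = A^Z(\{s_0\},\{s_2\}) = \pi_{s_0} a_{s_0 s_2}$. The main step is the second moment. Using the Markov property, stationarity, and detailed balance $\wt\pi_{\wt t}\wt a_{\wt t \wt s} = \wt\pi_{\wt s}\wt a_{\wt s \wt t}$ to decouple the incoming and outgoing excursions between $\wt s$ and $\ss^{-1}(s_2)$, I expect
$$Pr[\wt Z_0 \in \ss^{-1}(s_2),\ \wt Z_1 = \wt s,\ \wt Z_2 \in \ss^{-1}(s_2)] = \wt\pi_{\wt s}\, p_{\wt s}^{2}.$$
Summing this over $\wt s \in \ss^{-1}(s_0)$, applying the lift hypothesis at $T = 2$ to the triple $(\ss^{-1}(s_2), \ss^{-1}(s_0), \ss^{-1}(s_2))$, and then using detailed balance $\pi_{s_2} a_{s_2 s_0} = \pi_{s_0} a_{s_0 s_2}$ for the base chain yields
$$\sum_{\wt s \in \ss^{-1}(s_0)} \wt\pi_{\wt s}\, p_{\wt s}^{2} = \pi_{s_2}\, a_{s_2 s_0}\, a_{s_0 s_2} = \pi_{s_0}\, a_{s_0 s_2}^{2}.$$

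Plugging the three moments into the expansion of $\sum_{\wt s} \wt\pi_{\wt s}(p_{\wt s} - a_{s_0 s_2})^2$ makes it collapse to zero, so $p_{\wt s} = a_{s_0 s_2}$ for every $\wt s \in \ss^{-1}(s_0)$ with $\wt\pi_{\wt s} > 0$; this handles $\wt s_1$ itself, since states of zero stationary probability contribute nothing to $\wt Z$ and can be discarded without affecting the lift relation. The main obstacle is the second-moment identity, and the key point is that reversibility is used twice there: once in $\wt Z$ (detailed balance) to factor the three-step event as $\wt\pi_{\wt s}$ times a product of two one-sided sums which each recover $p_{\wt s}$, and once in $Z$ to rewrite $\pi_{s_2} a_{s_2 s_0}$ as $\pi_{s_0} a_{s_0 s_2}$. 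Everything else is bookkeeping.
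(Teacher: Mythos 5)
Your proof is correct and is essentially the paper's own argument: the paper applies the lift hypothesis to the same triple $(\ss^{-1}(s_2),\ss^{-1}(s_1),\ss^{-1}(s_2))$, uses reversibility of $\wt Z$ and of $Z$ exactly as you do, and concludes via the equality case of the Cauchy--Schwarz inequality, which is your zero-variance computation in different clothing. (Your caveat about states of zero stationary mass is implicitly present in the paper as well, since its proof divides by $A^{\wt Z}(\{\wt s_1\})$.)
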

\begin{proof}
The proof is built around the following equality, which follows from Definition \ref{LiftChain}
\begin{equation}A^{\wt Z}(\ss^{-1}(s_2),\ss^{-1}(s_1), \ss^{-1}(s_2)) = A^{Z}(s_2, s_1,s_2),\label{E1}\end{equation}
where $s_1,s_2 \in S$.

Fix $s_2 \in S$, expanding left and right sides of (\ref{E1}) we obtain 
\begin{equation}\label{SCEQ1}
\sum_{\wt s_1 \in \ss^{-1}(s_1)}({A^{\wt Z}(\ss^{-1}(s_2),\{\wt s_1\})}{P^{\wt Z}(\wt s_1, \ss^{-1}(s_2))}) = {A^{Z}(\{s_2\}, \{s_1\})}{P^{Z}(s_1,\{s_2\})}, \end{equation}
Reversibility of Markov chains implies that ${A^Z(\{s_1\}, \{s_2\}) = A^Z(\{s_2\}, \{s_1\})}$ and  
${A^{\wt Z}(\ss^{-1}(s_2),\{\wt s_1\}) = A^{\wt Z}(\{\wt s_1\}, \ss^{-1}(s_2))}$. Thus, we can rewrite (\ref{SCEQ1}) as 
\begin{equation}\sum_{\wt s_1 \in \ss^{-1}(s_1)}\frac{A^{\wt Z}(\ss^{-1}(s_2),\{\wt s_1\})^2}{A^{\wt Z}(\{\wt s_1\})} = \frac{A^{Z}(\{s_2\}, \{s_1\})^2}{A^{Z}(\{s_1\})}.\label{toCBS} \end{equation}
From Definition \ref{LiftChain} we obtain 

 \begin{equation}A^{Z}(\{s_1\}) = \sum_{\wt s_1 \in \ss^{-1}(s_1)}A^{\wt{Z}}(\{\wt s_1\}),\label{TE1}\end{equation}
\begin{equation}A^{Z}(\{s_2\},\{s_1\}) = \sum_{\wt s_1 \in \ss^{-1}(s_1)}A^{\wt{Z}}(\ss^{-1}(s_2), \{\wt s_1\}).\label{TE2}\end{equation}
Substituting the last two equalities into (\ref{toCBS}) and moving the denominator of the right side to the left we obtain 
\begin{equation}\sum_{\wt s_1 \in \ss^{-1}(s_1)}A^{\wt{Z}}(\{\wt s_1\})\sum_{\wt s_1 \in \ss^{-1}(s_1)}\frac{A^{\wt{Z}}(\ss^{-1}(s_2), \{\wt s_1\})^2}{A^{\wt{Z}}(\{\wt s_1\})} = \Big (\sum_{\wt s_1 \in \ss^{-1}(s_1)}A^{\wt{Z}}(\ss^{-1}(s_2), \{\wt s_1\})\Big )^2.\label{TE3}\end{equation}
This is the equality case of the Cauchy-Schwarz inequality.
Hence there exists a constant $\wt C = \wt C(s_1,s_2)$ such that $$\frac{A^{\wt{Z}}(\ss^{-1}(s_2), \{\wt s_1\})}{{A^{\wt{Z}}(\{\wt s_1\})}} \overset{def}{=} P^{\wt{Z}}(\wt s_1, \ss^{-1}(s_2))  = \wt C,$$ for every $\wt s_1 \in \ss^{-1}(s_1)$.
From (\ref{TE1}) and (\ref{TE2}) it follows that $${\frac{A^Z(\{s_2\}, \{s_1\})}{A^{Z}(\{s_1\})} \overset{def}{=} P^{Z}(s_1, \{s_2\}) = \wt C.}$$
\end{proof}

\begin{lemma}\label{MassLemma}
Let $\{\widetilde{Z_t}\}_{t = 0}^{\infty}$ and $\{Z_t\}_{t = 0}^{\infty}$ be stationary reversible Markov chains on finite sets $\widetilde{S}$ and $S$. Suppose that $\wt{Z}$ is a lift of $Z$ (see Definition \ref{DefLiftMarkovWalk})  along a map ${\ss:\wt{S} \rightarrow S}$. Let $s_1, s_2 \in S$ be such that ${A^{Z}(\{s_1\},\{s_2\}) \ne 0}$. Let $\wt{S}_1 \subset \ss^{-1}(s_1)$, $\wt{S}_2 \subset \ss^{-1}(s_2)$ be such that 
\begin{equation}A^{\wt{Z}}(\wt{S}_1, \ss^{-1}(s_2) \setminus \wt{S}_2) = 0,\label{Req1}\end{equation} 
\begin{equation}A^{\wt{Z}}(\wt{S}_2, \ss^{-1}(s_1) \setminus \wt{S}_1) = 0,\label{Req2}\end{equation} 
Then
$$\frac{A^{\wt{Z}}(\wt{S}_1)}{A^{Z}(\{s_1\}) }= \frac{A^{Z}(\wt{S}_2)}{A^{Z}(\{s_2\})}.$$
\end{lemma}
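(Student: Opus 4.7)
The plan is to combine Lemma \ref{SimpleChain} with the reversibility of both $Z$ and $\wt Z$. The hypotheses (\ref{Req1}) and (\ref{Req2}) say that, up to null events, all transitions out of $\wt S_1$ into $\ss^{-1}(s_2)$ actually land in $\wt S_2$, and vice versa. Thus the two-step masses $A^{\wt Z}(\wt S_1,\wt S_2)$ and $A^{\wt Z}(\wt S_2,\wt S_1)$ coincide with $A^{\wt Z}(\wt S_1,\ss^{-1}(s_2))$ and $A^{\wt Z}(\wt S_2,\ss^{-1}(s_1))$, each of which is computable from Lemma \ref{SimpleChain}.

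First I would apply Lemma \ref{SimpleChain} to get $P^{\wt Z}(\wt s,\ss^{-1}(s_2)) = P^Z(s_1,\{s_2\})$ for every $\wt s \in \ss^{-1}(s_1)$, and symmetrically $P^{\wt Z}(\wt s,\ss^{-1}(s_1)) = P^Z(s_2,\{s_1\})$ for $\wt s \in \ss^{-1}(s_2)$. Summing against the stationary distribution over $\wt S_1$ and using (\ref{Req1}) gives
$$A^{\wt Z}(\wt S_1,\wt S_2) = A^{\wt Z}(\wt S_1,\ss^{-1}(s_2)) = A^{\wt Z}(\wt S_1)\, P^Z(s_1,\{s_2\}),$$
and similarly (\ref{Req2}) yields $A^{\wt Z}(\wt S_2,\wt S_1) = A^{\wt Z}(\wt S_2)\, P^Z(s_2,\{s_1\})$. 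Reversibility of $\wt Z$ equates the two left-hand sides, while reversibility of $Z$ gives $A^Z(\{s_1\})\,P^Z(s_1,\{s_2\}) = A^Z(\{s_1\},\{s_2\}) = A^Z(\{s_2\})\,P^Z(s_2,\{s_1\})$. Substituting and cancelling the nonzero factor $A^Z(\{s_1\},\{s_2\})$ (using the hypothesis that it does not vanish) produces exactly the required identity
$$\frac{A^{\wt Z}(\wt S_1)}{A^Z(\{s_1\})} = \frac{A^{\wt Z}(\wt S_2)}{A^Z(\{s_2\})}.$$

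No real obstacle is expected: the whole conceptual content has been absorbed into Lemma \ref{SimpleChain}, and what remains is a symmetric flow-balance calculation. The only point that requires mild care is that the non-vanishing hypothesis on $A^Z(\{s_1\},\{s_2\})$ is what guarantees the two transition probabilities $P^Z(s_1,\{s_2\})$ and $P^Z(s_2,\{s_1\})$ are both nonzero, so that the final cancellation is legitimate.
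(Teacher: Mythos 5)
Your argument is correct and follows essentially the same route as the paper: apply Lemma \ref{SimpleChain} to each point of $\wt S_1$ (resp.\ $\wt S_2$), use (\ref{Req1}) and (\ref{Req2}) to replace $\ss^{-1}(s_2)$ by $\wt S_2$ (resp.\ $\ss^{-1}(s_1)$ by $\wt S_1$), sum, and then cancel the nonzero two-step mass $A^Z(\{s_1\},\{s_2\})$ using reversibility. Your write-up merely makes explicit the reversibility step ($A^{\wt Z}(\wt S_1,\wt S_2)=A^{\wt Z}(\wt S_2,\wt S_1)$) that the paper leaves implicit.
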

\begin{proof}
Let $\wt{s}_1 \in \wt{S}_1$, Lemma \ref{SimpleChain} implies that 
$$P^{\wt Z}(\wt{s}_1, \ss^{-1}(s_2)) = P^{Z}(s_1, \{s_2\}),$$  
Using the assumption $(\ref{Req1})$ we can rewrite this equality as 
$${A^{\wt{Z}}(\{\wt{s}_1\}, \wt{S}_2)} = {A^{\wt{Z}}(\wt{s}_1)}\frac{A^{Z}(\{s_1\}, \{s_2\})}{A^{Z}(\{s_1\})},$$
Summing the previous equalities for all $\wt{s_1} \in \wt{S}_1$ we have 
$${A^{\wt{Z}}(\wt{S}_1,\wt{S}_2) = {A^{\wt{Z}}(\wt{S}_1)}\frac{A^{Z}(\{s_1\}, \{s_2\})}{A^{Z}(\{s_1\})},}$$
The same argument shows that 
$${A^{\wt{Z}}(\wt{S}_2,\wt{S}_1) = {A^{\wt{Z}}(\wt{S}_2)}\frac{A^{Z}(\{s_2\}, \{s_1\})}{A^{Z}(\{s_2\})},}$$
Since  ${A^{Z}(\{s_1\},\{s_2\}) \ne 0}$ we obtain 
$$\frac{A^{\wt{Z}}(\wt{S}_1)}{A^{Z}(\{s_1\})} = \frac{A^{\wt{Z}}(\wt{S}_2)}{A^{Z}(\{s_2\})}.$$
\end{proof}


\begin{proof}[Proof of Proposition \ref{CantLift}]
Let $X$, $\wt X$ and $\chi$ be as in Example \ref{ExCantLift}.
By contradiction, suppose there exist $\{\widetilde{Z_t}\}_{t = 0}^{\infty}$ a stationary reversible Markov chains on finite set $\widetilde{S}$ and a map $\widetilde{f}:\widetilde{S} \rightarrow \widetilde{X}$, such that Markov walk $\wt{f}(\wt{Z_t})$ is a metric lift of $f(Z_t)$ along $\chi$. Note that since $f$ is injective the Markov chain $\wt Z_t$ is a lift of a Markov chain $Z_t$ along a map $\ss:\wt S \rightarrow S$ defined by $\ss = f^{-1} \circ \chi \circ \wt f$. 
For $1 \le i \le 4$, $1 \le j \le 12$ we denote $A^Z(x_i)$ by $p_i$ and $A^{\wt Z}(\wt f^{-1}(\wt x_j))$ by $q_j$.

Let $i = 1,\dots,11$, consider $\wt{x}_i$ and $\wt{x}_{i+1}$. By Lemma \ref{MassLemma} applied to $s_1 = x_{r_4(i)}, s_2 = x_{r_4(i+1)}, \wt{S}_1 = \wt{f}^{-1}(\wt{x}_i), \wt{S}_2 = \wt{f}^{-1}(\wt{x}_{i+1})$ we have 
\begin{equation}\frac{q_i}{p_{r_4(i)}}= \frac{q_{i+1}}{p_{r_4(i+1)}} .\label{masseq}\end{equation}
These equalities imply that 
\begin{equation}q_3  = q_1  = q_5 \not = 0.\label{AE}\end{equation}
Lemma \ref{MassLemma} applied to $s_1 = x_{3}, s_2 = x_{1}, \wt{S}_1 = \wt{f}^{-1}(\wt{x}_3), \wt{S}_2 = \wt{f}^{-1}(\{\wt{x}_{1},\wt{x}_{5}\})$ implies that  
$$\frac{q_3}{p_3}= \frac{q_1 + q_5}{p_1}.$$
Since $p_3 = p_1 = \frac{3}{10}$ we have 
$$q_3 = q_1 + q_5.$$
This contradicts (\ref{AE}). 
\end{proof}


\bibliography{circle}
\bibliographystyle{plain}

\end{document}